\title{Another look at $p$-adic  Fourier-theory}
\author{Guido Kings}
\address{Fakult\"at f\"ur Mathematik \\
Universit\"at Regensburg\\
93040 Regensburg\\
Germany}
\author{Johannes Sprang}
\address{
Fakultät für Mathematik\\
Universität Duisburg-Essen\\
45127 Essen\\
Germany}
\thanks{This research was supported by the DFG grant: SFB 1085 Higher invariants}
\theoremstyle{plain}
    \newtheorem{theorem}{Theorem}[section]
\newtheorem*{theorem*}{Theorem}
    \newtheorem{lemma}[theorem]{Lemma}
    \newtheorem{proposition}[theorem]{Proposition}
    \newtheorem{corollary}[theorem]{Corollary}
    \newtheorem*{corollary*}{Corollary}
\theoremstyle{remark}
    \newtheorem{remark}[theorem]{Remark}
     \newtheorem{example}[theorem]{Example}
\theoremstyle{definition}
    \newtheorem{definition}[theorem]{Definition}
\DeclareMathOperator{\Sym}{Sym}
\DeclareMathOperator{\Hom}{Hom}
\DeclareMathOperator{\End}{End}
\DeclareMathOperator{\Spf}{Spf}
\DeclareMathOperator{\Sp}{Sp}
\DeclareMathOperator{\Gal}{Gal}
\DeclareMathOperator{\Lie}{Lie}
\newcommand{\sG}{\mathscr{G}}
\newcommand{\sO}{\mathscr{O}}
\newcommand{\cI}{\mathcal{I}}
\newcommand{\cO}{\mathcal{O}}
\newcommand{\frt}{\mathfrak{t}}
\newcommand{\bbA}{\mathbb{A}}
\newcommand{\CC}{\mathbb{C}}
\newcommand{\QQ}{\mathbb{Q}}
\newcommand{\ZZ}{\mathbb{Z}}
\newcommand{\NN}{\mathbb{N}}
\newcommand{\Q}{\mathbb{Q}}
\newcommand{\C}{\mathbb{C}}
\newcommand{\Qp}{{\QQ_p}}
\newcommand{\Zp}{{\ZZ_p}}
\newcommand{\pDiv}{\mathbf{pDiv}}
\newcommand{\HTpairs}{\mathbf{HTpairs}}
\newcommand{\HTDpairs}{\mathbf{HT}^\vee\mathbf{pairs}}
\newcommand{\Adic}{\mathbf{Adic}}
\newcommand{\Rigid}{\mathbf{Rigid}}
\newcommand{\RigidGr}{\mathbf{RigidGr}}
\newcommand{\CatCharVar}{\mathbf{CharVar}}
\newcommand{\grT}{T}
\newcommand{\CharVar}[1]{\widehat{#1}}
\newcommand{\mult}{\mathrm{mult}}
\newcommand{\rig}{\mathrm{rig}}
\newcommand{\ad}{\mathrm{ad}}
\newcommand{\an}{\mathrm{an}}
\newcommand{\Lan}{\text{L-an}}
\newcommand{\isom}{\cong}
\newcommand{\id}{\mathrm{id}}
\def\endpiece{xxx}
\def\makeAlphabet[#1]{\expandafter\makeA#1,xxx,}
\def\makealphabet[#1]{\expandafter\makea#1,xxx,}
\def\makeA#1,{\def\temp{#1}\ifx\temp\endpiece\else%
\mkbb{#1}\mkfrak{#1}\mkbf{#1}\mkcal{#1}\mkscr{#1}\expandafter\makeA\fi}%
\def\makea#1,{\def\temp{#1}\ifx\temp\endpiece\else\mkfrak{#1}\mkbf{#1}\expandafter\makea\fi}%
\def\mkbb#1{\expandafter\def\csname bb#1\endcsname{\mathbb{#1}}}%
\def\mkfrak#1{\expandafter\def\csname fr#1\endcsname{\mathfrak{#1}}}
\def\mkbf#1{\expandafter\def\csname b#1\endcsname{\mathbf{#1}}}
\def\mkcal#1{\expandafter\def\csname c#1\endcsname{\mathcal{#1}}}
\def\mkscr#1{\expandafter\def\csname s#1\endcsname{\mathscr{#1}}}
\def\makeop[#1]{\xmakeop#1,xxx,}
\def\mkop#1{\expandafter\def\csname #1\endcsname{{\mathrm{#1}}}} %
\def\xmakeop#1,{\def\temp{#1}\ifx\temp\endpiece\else\mkop{#1}\expandafter\xmakeop\fi}%
\newcommand{\Gmf}{\widehat{\mathbb{G}}_m}
\newcommand{\OCp}{\sO_{\CC_p}}
\newcommand{\Cp}{{\CC_p}}
   \def\MR#1{}
\numberwithin{equation}{section}
\begin{document}

\begin{abstract}
In this short note, we show that a natural generalization of the $p$-adic Fourier theory of Schneider and Teitelbaum follows immediately from the classification of $p$-divisible groups over $\OCp$ by Scholze and Weinstein.
\end{abstract}

\setcounter{tocdepth}{1}
\maketitle

\section*{Introduction}
In \cite{Amice64} and \cite{Amice78}, Amice has given an explicit description of the space of $K$-valued locally $\Qp$-analytic distributions $D(\Zp,K)$ on $\Zp$, when $K$ is a complete subfield of $\Cp$. 

In an influential paper \cite{ST}, Schneider and Teitelbaum have generalized the work of Amice to the $K$-Fr\'echet algebra $D^{\Lan}(\sO_L,K)$ of locally $L$-analytic $K$-valued distributions on $\sO_L$, where $\Q_p\subset L\subset K\subset \C_p$ are field extensions and $\sO_L$ is the ring of integers in $L$. To describe these distributions, they introduce a character variety, which is a rigid analytic group variety $\widehat{\sO_L}^{\Lan}$ whose closed points in a field $K$ parametrize the $K$-valued locally $L$-analytic characters of $\sO_L$. In this setting their Fourier-transform is an isomorphism
\begin{equation*}
    F\colon D^{\Lan}(\sO_L,K)\isom \cO(\widehat{\sO_L}^{\Lan}/K),
\end{equation*}
where $\cO(\widehat{\sO_L}^{\Lan}/K)$ is the ring of global sections of the rigid variety $\widehat{\sO_L}^{\Lan}/K$. Their main theorem gives furthermore a description of the character variety over $\C_p$ in terms of the rigid generic fiber of a Lubin--Tate formal group with endomorphisms by $\sO_L$. The $p$-adic Fourier theory of Schneider and Teitelbaum has plenty of applications in number theory and representation theory. On the one hand, the $p$-adic Fourier theory has been studied extensively with the goal of generalizing the $p$-adic local Langlands correspondence from $\mathrm{GL}_2(\mathbb{Q}_p)$ to $\mathrm{GL}_2(L)$. On the other hand, the theory of $p$-adic distributions plays a key role in the construction of $p$-adic $L$-functions. For example, Schneider and Teitelbaum have constructed a $p$-adic $L$-function for Hecke characters of imaginary quadratic fields at inert primes $p$ as an application of their theory. 

In this note, we introduce what we call \emph{character groups with differential conditions}. It turns out that these character groups  carry in a natural way the structure of a rigid analytic variety. The character varieties considered by Schneider--Teitelbaum and Amice are a special case of our character varieties with differential conditions, as the $L$-analyticity can be expressed in a natural way as a  Cauchy--Riemann type differential condition. We show that the functions of our character varieties  can be interpreted as certain Fr\'echet spaces of distributions, in exactly the same fashion Schneider--Teitelbaum. We show with the theory of Scholze--Weinstein  that over $\C_p$ our character varieties are uniformized by $p$-divisible groups. In fact, each $p$-divisible group appears as a character variety with such a differential condition. 

In a follow up paper we will apply our results to the construction of $p$-adic $L$-functions for arbitrary critical Hecke characters for primes $p$ of  arbitrary split behaviour using the results in \cite{Kings-Sprang}. 
It is in this setting that the (higher dimensional) character varieties with differential conditions naturally appear and where the restriction to just $L$-analytic functions is no longer sufficient.

After this work was completed, we were informed by van Hoften, Howe and Graham, that they obtained similar results independently. 

\section{Character groups with differential conditions}
Let $ K$ be a field extension of $\Q_p$ which is complete with respect to a non-archimedean valuation $|\_ |$, extending the valuation on $\Qp$. Let $\grT$ be a free $\Zp$-module of finite rank. We will view $\grT$ as a $\Qp$-analytic group and denote by 
\begin{equation*}
    C^{\an}(\grT,K)
\end{equation*}
the space of $K$-valued locally $\Qp$-analytic functions on $\grT$. 
\begin{definition}
Let
\begin{equation*}
    \CharVar{\grT}(K):=\{\chi\in \Hom(\grT,K^{\times})\mid \chi\in C^{\an}(\grT,K)\}
\end{equation*}
be the group of locally $\Qp$-analytic $K$-valued characters on $\grT$. 
\end{definition}
The Lie algebra $\frt$ of $\grT$ is simply $\grT\otimes_{\Zp}\Qp$. Let us write $\frt^\vee:=\Hom_{\Qp}(\frt,\Qp)$ for the co-Lie algebra. We get the differential
\[
	d\colon C^{\an}(\grT,K)\to C^{\an}(\grT,K)\otimes_{\Qp} \frt^\vee.
\]
We consider subgroups of $C^{\an}(\grT,K)$ and $\CharVar{\grT}(K)$ defined by differential conditions.
\begin{definition}
Let $L$ be a complete field extension of $\Qp$ and
\begin{equation*}
    W\subset  \Hom_{\Zp}(\grT,L)=\Hom_{\Qp}(\frt,L)
\end{equation*}
be a $L$-subvector space. For any complete field $K$ with $L\subseteq K$, we define
\begin{equation*}
    C^{\an}_W(\grT,K):=\left\{f \in C^{\an}(\grT,K)\mid d\chi\in  C^{\an}(\grT,K)\otimes_L W\right\},
\end{equation*}
and
\begin{equation*}
    \CharVar{\grT}_W(K):=\CharVar{\grT}(K) \cap C_W^{\an}(\grT,K).
\end{equation*}
We call $C_W^{\an}(\grT,K)$ the space of locally $\Qp$-analytic functions with differential condition $W$, and $\CharVar{\grT}_W(K)$ the \emph{character group with differential condition $W$}. 
\end{definition}

\begin{remark} Of course, one has $C_W^{\an}(T,K)=C_{W_K}^{\an}(T,K)$ for $W_K:=W\otimes_L K$. The distinction between $L$ and $K$ will be important later, as we will see that $\CharVar{\grT}_W(K)$ are the closed points in $K$ of a rigid analytic variety defined over $L$. 
\end{remark}

Let us consider some examples.
\begin{example}\label{exmpl:DistrQp}
	In the case $W=\Hom_{\Zp}(T,L)$, the differential condition $W$ does not impose any restrictions and we obtain
	\begin{align*}
		C_{\Hom_{\Zp}(T,L)}^{\an}(\grT,K)&= C^{\an}(T,K),\\
		\CharVar{\grT}_{\Hom_{\Zp}(T,L)}(K)&= \CharVar{\grT}(K).
	\end{align*}
\end{example}
The other extreme $W=0$ gives:
\begin{example}\label{exmpl:DistrTriv}
	In the case $W=\{0\}$, the only locally $L$-analytic functions with trivial differential are the locally constant functions $C^{\text{lc}}(\grT,K)$, and we obtain:
	\begin{align*}
		C_{\{0\}}^{\an}(\grT,K)&= C^{\text{lc}}(T,K),\\
		\CharVar{\grT}_{\{0\}}(K)&= \CharVar{\grT}^{\text{lc}}(K),
	\end{align*}
	where $\CharVar{\grT}^{\text{lc}}(K)$ denotes the locally constant (and hence finite) characters.
\end{example} 

The next example shows that the character group with differential condition $W$ is a natural generalization of the group of locally $L$-analytic characters studied by Schneider-Teitelbaum \cite{ST}.
\begin{example}\label{exmpl:DistrL}
Let $\Q_p\subset  L\subset K$ with $L$ finite over $\Qp$. The valuation ring $\grT:=\sO_L$ is a free $\Zp$-module of finite rank with Lie algebra $\frt=L$. We consider the one-dimensional subspace
\[
	W:=\Hom_{L}(L,L)\subseteq \Hom_{\Qp}(L,L)=\Hom_{\Zp}(\grT,L)
\]
of $L$-linear maps from $L$ to $L$. In this case, we have
\begin{align*}
	C_W^{\an}(\grT,K)&= C^{\Lan}(\sO_L,K)\\
	\CharVar{(\sO_L)}_{W}(K)&= \CharVar{(\sO_L)}^{\Lan}(K),
\end{align*}
where  $C^{\Lan}(\sO_L,K)$ denotes the space of all locally $L$-analytic functions on $\sO_L$ and $\CharVar{(\sO_L)}^{\Lan}(K)$ is the group of locally $L$-analytic characters studied in \cite{ST}.
\end{example}

The previous example can be generalized as follows. The notion of $\Sigma$-analytic functions, plays an important role in the interpolation of $p$-adic $L$-functions for algebraic Hecke  characters.
\begin{definition}\label{def:SigmaAnalytic}
Let $L$ be a finite extension of $\Qp$ and write $L^{\mathrm{Gal}}$ for the Galois closure of $L$ in $\Cp$. For any complete field extension $K$ of $L^{\Gal}$ and any subset $\Sigma\subseteq \Hom_{\Qp\text{-alg}}(L,K)$, we say that $f\in C^{\an}(\sO_L,K)$ is \emph{$\Sigma$-analytic} if for every $x\in \sO_L$ there exists an open neighbourhood $U$ of $x$ such that $f|_U$ can be written as a convergent series
\[
	f|_U(z)=\sum_{\underline{n}\in \NN_0^\Sigma} a_{\underline{n}} z^{\underline{n}}, \quad a_{\underline{n}}\in K
\]
where $z^{\underline{n}}:=\prod_{\sigma\in \Sigma} \sigma(z)^{n_\sigma}$. We write $C^{\Sigma\text{-an}}(\sO_L,K)$ for the space of locally $\Sigma$-analytic functions on $\sO_L$.
\end{definition}

Locally $\Sigma$-analytic functions form a natural generalization of locally $L$-analytic functions on $\sO_L$. Indeed, for $\Sigma:=\{L\subseteq K\}\subseteq  \Hom_{\Qp\text{-alg}}(L,K)$ given by the singleton consisting of our fixed inclusion $L\subseteq K$, we recover just the space of locally $L$-analytic functions with values in $K$. The space of locally $\Sigma$-analytic functions can also be described in terms of a differential condition:
\begin{example}\label{exmpl:DistrWSigma}
Let  $L$ be a finite extension of $\Qp$ and $K$ a complete field containing the Galois closure $L^{\mathrm{Gal}}$.
We have 
\begin{multline*}
	\Hom_{\Zp}(\sO_L,K)=\Hom_{K}(L\otimes_{\Qp}K,K)\\
	=\Hom_{L}\left(\bigoplus_{\sigma\in \Hom_{\Qp\text{-alg}}(L,K)} K,K\right)=\prod_{\sigma\in \Hom_{\Qp\text{-alg}}(L,K)} \Hom_{K}(K,K).
\end{multline*}
So, any subset $\Sigma\subseteq \Hom_{\Qp\text{-alg}}(L,K)$ gives rise to a $K$-linear subspace $W(\Sigma)\subseteq \Hom_{\Zp}(\sO_L,K)$ corresponding to $\prod_{\sigma\in \Sigma} \Hom_{K}(K,K)$ under the above isomorphism. It is readily checked that
\[
	C^{\an}_{W(\Sigma)}(\sO_L,K)=C^{\Sigma\text{-an}}(\sO_L,K).
\]
\end{example}
We will see later that the $p$-adic Fourier theory of locally $\Sigma$-analytic functions with values in $\Cp$ is closely related to $p$-divisible groups over $\OCp$ with CM by $\sO_L$.

\section{Character varieties with differential conditions}
In this section we show that the character group $\CharVar{\grT}_W(K)$ is the set of closed points in $K$ of a rigid analytic variety over $L$.
As a preparation, we remark that for characters it suffices to check the differential condition at $0\in \grT$:
\begin{lemma}\label{lem:char-diagram} The group $\CharVar{\grT}_W(K)$ sits in a cartesian diagram 
\begin{equation*}
    \xymatrix{\CharVar{\grT}_W(K)\ar[r]\ar[d]&\CharVar{\grT}(K)\ar[d]^{(d-)|_0}\\
W\ar[r]& \Hom_{\Zp}(\grT,K),}
\end{equation*}
where the right vertical arrow is the map $\chi \mapsto (d\chi)|_0$, which assigns to $\chi$ the value of $d\chi$ at $0\in \grT$.
\end{lemma}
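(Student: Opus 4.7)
The approach hinges on the observation that for a character $\chi \in \CharVar{\grT}(K)$, the full differential $d\chi$ is rigidly determined by its value $(d\chi)|_0 \in \Hom_{\Zp}(\grT, K)$ at the origin. I would derive this by applying $d$ to both sides of the character equation $\chi(x+y) = \chi(x) \chi(y)$, differentiating with respect to $y$ and specializing $y = 0$: the chain rule on the left and the Leibniz rule on the right combine to yield $(d\chi)|_x = \chi(x) \cdot (d\chi)|_0$ for every $x \in \grT$, or equivalently
\[
    d\chi = \chi \cdot (d\chi)|_0
\]
inside $C^{\an}(\grT, K) \otimes_{\Qp} \frt^\vee$. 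This is essentially the statement that $d\log \chi$ is a constant one-form on $\grT$, and it is the heart of the argument.

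I would then translate this identity into the defining condition for $\CharVar{\grT}_W(K)$. Using the canonical isomorphism $C^{\an}(\grT, K) \otimes_{\Qp} \frt^\vee = C^{\an}(\grT, K) \otimes_L \Hom_{\Zp}(\grT, L)$ (which holds because $\frt$ is finite-dimensional over $\Qp$) and the invertibility of $\chi$ in $C^{\an}(\grT, K)$, the condition $d\chi \in C^{\an}(\grT, K) \otimes_L W$ becomes equivalent to requiring $(d\chi)|_0$ to lie in the image of $W$ inside $\Hom_{\Zp}(\grT, K)$, where $W$ is embedded via $W \hookrightarrow W \otimes_L K$, consistently with the remark that $C^{\an}_W = C^{\an}_{W_K}$. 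This is precisely the pullback condition defining the cartesian square.

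I do not anticipate any substantial obstacle. The proof reduces to the logarithmic differential identity together with routine book-keeping for tensor products over $\Qp$, $L$, and $K$. The only minor care concerns the bottom-left corner of the diagram: one must remember the insensitivity of the condition to replacing $W$ by $W_K = W \otimes_L K$, so that the image of $W$ in $\Hom_{\Zp}(\grT, K)$ is really understood after this base change.
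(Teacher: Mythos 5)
Your proposal is correct and follows exactly the paper's argument: the key identity $(d\chi)|_g = \chi(g)\cdot(d\chi)|_0$, obtained by differentiating the character equation, reduces the differential condition on all of $\grT$ to the condition at the origin, which is precisely the pullback condition. The extra bookkeeping you supply on the tensor products and on replacing $W$ by $W_K$ is consistent with the paper's conventions and does not change the route.
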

\begin{proof}
For $g\in \grT$, we have
\[
	(d\chi)|_g=\chi(g) \cdot (d\chi)|_0,
\]
hence $d\chi \in C^{\an}(\grT,K)\otimes_K W$ if and only if $(d\chi)|_0\in W$. This shows
\[
	\CharVar{\grT}_W(K)=\{ \chi \in \CharVar{\grT}(K) \mid (d\chi)|_0\in W\},
\]
and the Lemma follows.
\end{proof}
The group $\CharVar{\grT}(K)$ of locally $\Qp$-analytic characters is the group of $K$-valued points of a rigid analytic variety. More precisely, we have
\[
	\Hom_{\Zp}(\grT,\Zp)\otimes_{\Zp}B_1(K) \xrightarrow{\sim} \CharVar{\grT}(K),\quad \beta\otimes z \mapsto (g\mapsto \chi_{\beta\otimes z}(g):=\exp(\beta(g)\cdot \log z)),
\]
where $B_1(K):=\{z\in K:|z-1|<1\}$ are the $K$-valued points of the rigid analytic generic fiber $(\Gmf)_\eta^{\rig}$ of the formal multiplicative group. The following theorem shows that this result generalizes to character groups with differential condition:
\begin{theorem}\label{thm:AdicCharVariety}
Let $\grT$ be a free $\Zp$-module of finite rank and $W\subset \Hom_{\Zp}(\grT, L)$ be a $L$-subvector space. For any complete field $K$ with $L\subseteq K\subseteq \Cp$, the character group $\CharVar{\grT}_W(K)$ is the set of $K$-valued points of a rigid analytic space $\CharVar{\grT}_W$ over $L$. The rigid analytic space $\CharVar{\grT}_W$ fits into a cartesian diagram
\begin{equation}\label{eq:FiberProd}
    \xymatrix{\CharVar{\grT}_W\ar[r]\ar[d]&	\Hom_{\Zp}(\grT,\Zp)\otimes_{\Zp} (\Gmf)_\eta^{\rig}\ar[d]^{\log}\\
W\otimes_L \bbA^1 \ar[r]& \Hom_{\Zp}(\grT,L)\otimes_{L}\bbA^1=\Hom_{\Zp}(\grT,\Zp)\otimes_{\Zp}\bbA^1,}
\end{equation}
where $\bbA^1$ denotes the rigid analytic affine space over $L$ and $(\Gmf)_\eta^{\rig}$ denotes the rigid analytic generic fiber of the formal multiplicative group $\Gmf$ over $L$.
\end{theorem}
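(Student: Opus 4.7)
The plan is to upgrade the cartesian square of sets from Lemma~\ref{lem:char-diagram} to a cartesian square of rigid analytic spaces over $L$, and to define $\CharVar{\grT}_W$ as the resulting fiber product. I would proceed in three steps.

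First, I would identify the three other corners as rigid analytic spaces over $L$. As recalled just before the theorem, the assignment $\beta\otimes z \mapsto (g\mapsto \exp(\beta(g)\log z))$ identifies $\CharVar{\grT}(K)$ with the $K$-points of the rigid analytic space $\Hom_{\Zp}(\grT,\Zp)\otimes_{\Zp}(\Gmf)^{\rig}_\eta$, which is an $n$-fold product of open unit disks (with $n=\rk_{\Zp}\grT$). The two bottom corners are rigid affine spaces over $L$, namely $W\otimes_L\bbA^1\cong\bbA^{\dim_L W}_L$ and $\Hom_{\Zp}(\grT,\Zp)\otimes_{\Zp}\bbA^1\cong\bbA^n_L$, and the bottom horizontal arrow is the closed immersion induced by the $L$-linear inclusion $W\hookrightarrow \Hom_{\Zp}(\grT,L)$.

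Next, I would realize the right vertical arrow as a morphism of rigid analytic spaces. The $p$-adic logarithm $\log\colon (\Gmf)^{\rig}_\eta\to \bbA^1_L$ is a morphism of rigid analytic spaces, and tensoring componentwise with the free $\Zp$-module $\Hom_{\Zp}(\grT,\Zp)$ yields a morphism
\[
\Hom_{\Zp}(\grT,\Zp)\otimes_{\Zp}\log \colon \Hom_{\Zp}(\grT,\Zp)\otimes_{\Zp}(\Gmf)^{\rig}_\eta \longrightarrow \Hom_{\Zp}(\grT,\Zp)\otimes_{\Zp}\bbA^1.
\]
A direct calculation with $\chi_{\beta\otimes z}(g)=\exp(\beta(g)\log z)$ gives $(d\chi_{\beta\otimes z})|_0=\log(z)\cdot\beta$, so on $K$-points this morphism recovers the set-theoretic map $\chi\mapsto(d\chi)|_0$ that appears in Lemma~\ref{lem:char-diagram}.

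Finally, I would define $\CharVar{\grT}_W$ as the fiber product of the resulting diagram in the category of rigid analytic spaces over $L$; since the bottom horizontal arrow is a closed immersion, this fiber product exists concretely as the closed subspace of $\Hom_{\Zp}(\grT,\Zp)\otimes_{\Zp}(\Gmf)^{\rig}_\eta$ cut out by pulling back the ideal of $W\otimes_L\bbA^1$ along the logarithm morphism. The functor of $K$-points preserves such fiber products, and comparing with the cartesian square of Lemma~\ref{lem:char-diagram} gives the desired identification of $\CharVar{\grT}_W(K)$ with the $K$-points of the rigid analytic space $\CharVar{\grT}_W$. The only non-formal step is the identification of the right vertical arrow with $\chi\mapsto(d\chi)|_0$ on $K$-points, which ultimately rests on the explicit differentiation of $g\mapsto\exp(\beta(g)\log z)$ at $0$; everything else is bookkeeping in rigid analytic geometry.
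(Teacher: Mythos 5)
Your proposal is correct and follows essentially the same route as the paper: define $\CharVar{\grT}_W$ as the fiber product in \eqref{eq:FiberProd}, use the explicit computation $(d\chi_{\beta\otimes z})|_0=\beta\cdot\log z$ to identify the right vertical arrow with $\chi\mapsto (d\chi)|_0$ on $K$-points, and conclude via Lemma~\ref{lem:char-diagram}. Your additional remark that the fiber product exists because the bottom arrow is a closed immersion is a useful point that the paper leaves implicit.
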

\begin{proof}
Let us define $\CharVar{\grT}_W$ as the adic space defined by the fiber product in  \eqref{eq:FiberProd}. We have to show that the group of $K$-valued points of  $\CharVar{\grT}_W$ is exactly the character group with differential condition $W$.

 Note that the $K$-valued points of $\Hom_{\Zp}(\grT,\Zp)\otimes_{\Zp} (\Gmf)_\eta^{\rig}$ are exactly the locally $\Qp$-analytic characters of $\grT$. More precisely, the group of $K$-valued points of $(\Gmf)_\eta^{\rig}$ is given by
 \[
 	(\Gmf)_\eta^{\rig}(K)=\{z\in K:|z-1|<1\},
 \]
 and we get an isomorphism
 \begin{equation}\label{eq:QpCharIso}
 	\Hom_{\Zp}(\grT,\Zp)\otimes_{\Zp} (\Gmf)_\eta^{\rig}(K)\xrightarrow{\sim} \CharVar{\grT}(K), \quad \beta\otimes z \mapsto (g\mapsto \chi_{\beta\otimes z}(g)),
 \end{equation}
 with $\chi_{\beta\otimes z}(g):=z^{\beta(g)}=\exp(\beta(g)\cdot \log z)$.
 The formula 
 \[
 	(d\chi_{\beta\otimes z})|_0=\beta \cdot \log z \in \Hom_{\Zp}(\grT,K)
 \]
 shows that the isomorphism \eqref{eq:QpCharIso} fits into a commutative diagram
\begin{equation*}
    \xymatrix{\Hom_{\Zp}(\grT,\Zp)\otimes_{\Zp} (\Gmf)_\eta^{\rig}(K)\ar[r]^-{\sim}\ar[d]^{\id\otimes \log}&	\CharVar{\grT}(K)\ar[dl]^-{(d-)|_0}\\
\Hom_{\Zp}(\grT,\Zp)\otimes_{\Zp} \bbA^1 .}
\end{equation*}
  This shows that the $K$-valued points of the lower-left part of the diagram \eqref{eq:FiberProd} is isomorphic to:
\begin{equation*}
    \xymatrix{&	\CharVar{\grT}(K) \ar[d]^{(d-)|_0}\\
W \ar[r]& \Hom_{\Zp}(\grT,K),}
\end{equation*}
and the Theorem follows from Lemma \ref{lem:char-diagram}.
\end{proof}
\begin{definition} For a finite free $\Zp$-module $\grT$ and $W\subseteq\Hom_{\Zp}(\grT,L)$, we call the rigid analytic space $\CharVar{\grT}_W/L$ the
\emph{character variety with differential condition $W$}. We denote by $\CatCharVar_L$ the full subcategory of the category of rigid analytic group varieties over $L$ whose objects are the character varieties $\CharVar{\grT}_W$ for $T$ a finite free $\Zp$-module and $W\subseteq\Hom_{\Zp}(\grT,L)$.
\end{definition}

\begin{remark}
Note that we have the following functoriality: Let $T_1$ and $T_2$ be finite free $\Zp$-modules together with $L$-subvector spaces  $W_1\subseteq \Hom_{\Zp}(T_1,L)$ and $W_2\subseteq \Hom_{\Zp}(T_2,L)$. Any morphism $\varphi\colon T_2\to T_1$ such that $\varphi^*\colon \Hom_{\Zp}(T_1,L)\to  \Hom_{\Zp}(T_2,L)$ satisfies $\varphi^*(W_1)\subseteq W_2$ induces a morphism in $\CatCharVar_L$:
\[
	\CharVar{(\grT_1)}_{W_1}\to \CharVar{(\grT_2)}_{W_2}.
\]
\end{remark}

\section{The Fourier transform} Let $\grT$ be a free $\Zp$-module of finite rank as before and consider complete fields $\Qp\subseteq L\subseteq K\subseteq \Cp$. We will keep the notation $\frt:=\grT\otimes_{\Zp}\Qp$ for the Lie algebra of $\grT$ as a $\Qp$-analytic group. Recall that the algebra of $K$-valued distributions $D(\grT,K)$ is the dual of the Banach space $C^{\an}(\grT,K)$ equipped with the strong dual topology. This is a commutative Fr\'echet $K$-algebra, where multiplication is given by the convolution product $\ast$.  We denote the evaluation of $\mu\in D(\grT,K)$ on functions $h\in C^{\an}(\grT,K)$
by
\begin{equation*}
    \int_\grT h\,d\mu:=\mu(h).
\end{equation*}
The \emph{Fourier transform} of $\lambda \in D(\grT,K)$ is the map
\[
	F_\lambda\colon \CharVar{\grT}(\Cp)\to \Cp,\quad \chi\mapsto \lambda(\chi).
\]
It is not difficult to see that $F_{\lambda\ast \mu}=F_\lambda \cdot F_\mu$ for all $\lambda,\mu \in D(\grT,K)$ (see \cite[Proposition 1.4]{ST}). The main result of $p$-adic Fourier theory for $\Qp$-analytic function is:
\begin{theorem}[Amice]\label{thm:Amice}
The Fourier transform induces an isomorphism of Fr\'echet algebras
\[	
	F\colon D(\grT,K)\xrightarrow{\sim} \cO(\CharVar{\grT}/K).
\]
\end{theorem}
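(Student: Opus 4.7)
The plan is to reduce to the rank-one case $\grT = \Zp$ and then apply Mahler's theorem. First I would choose a $\Zp$-basis $e_1,\ldots,e_d$ of $\grT$, which induces topological isomorphisms
\[
C^{\an}(\grT,K) \isom C^{\an}(\Zp,K) \htimes_K \cdots \htimes_K C^{\an}(\Zp,K)
\]
($d$ factors) and dually $D(\grT,K) \isom D(\Zp,K) \htimes_K \cdots \htimes_K D(\Zp,K)$. On the character-variety side, Theorem~\ref{thm:AdicCharVariety} applied with the trivial differential condition $W = \Hom_{\Zp}(\grT,L)$ gives $\CharVar{\grT} \isom \bigl((\Gmf)_\eta^{\rig}\bigr)^d$, and so $\cO(\CharVar{\grT}/K)$ is likewise a $d$-fold completed tensor product. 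Since the Fourier transform is compatible with these decompositions by Fubini, it suffices to treat $\grT = \Zp$.

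For $\grT = \Zp$, a locally $\Qp$-analytic character has the form $\chi_z(n) = (1+w)^n$ with $z = 1+w$ and $|w|<1$, identifying $\cO(\CharVar{\Zp}/K)$ with the Fr\'echet algebra of rigid analytic functions on the open unit disk $\{|w|<1\}$. The Fourier transform then reads
\[
F_\mu(w) \;=\; \int_{\Zp}(1+w)^x\, d\mu(x) \;=\; \sum_{n\geq 0}\mu\!\bigl(\tbinom{x}{n}\bigr)\, w^n,
\]
using the Mahler expansion $(1+w)^x = \sum_n \binom{x}{n} w^n$ together with Mahler's theorem, which asserts that the binomial polynomials $\tbinom{x}{n}$ form a topological basis of $C^{\an}(\Zp,K)$.

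The main analytic step, and the principal obstacle, is to match the strong dual topology of $D(\Zp,K)$ with the Fr\'echet topology of $\cO(\{|w|<1\}/K)$. Writing $C^{\an}(\Zp,K) = \varinjlim_r C^{r\text{-an}}(\Zp,K)$ as a countable inductive limit of Banach spaces of functions analytic on sub-balls of radius $r<1$, one uses sharp estimates for the Mahler norms $\|\tbinom{x}{n}\|_{r\text{-an}}$ (essentially $r^{-n}$) to translate the continuity of $\mu$ on each Banach piece into the condition $|a_n|\rho^n \to 0$ for every $\rho<1$ on the coefficients $a_n := \mu(\tbinom{x}{n})$, which is exactly the convergence condition for $\sum a_n w^n$ on the open unit disk; one then checks that the resulting bijection is bicontinuous. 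Finally, the algebra-homomorphism property $F_{\lambda\ast\mu} = F_\lambda\cdot F_\mu$ is a formal consequence of $\chi_z(x+y) = \chi_z(x)\chi_z(y)$ and Fubini, yielding the desired isomorphism of Fr\'echet $K$-algebras.
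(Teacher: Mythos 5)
Your proposal is correct in outline, but it takes a genuinely different route from the paper for the simple reason that the paper does not prove this theorem at all: it is quoted as known, with a citation to \cite[Theorem 1.3]{Amice78} and \cite[Theorem 2.2]{ST}. What you have written is, in effect, a sketch of the classical proof of that cited result. Your reduction to $\grT=\Zp$ via completed tensor products is sound (one should note that the dual decomposition $D(\grT,K)\isom D(\Zp,K)\htimes_K\cdots\htimes_K D(\Zp,K)$ uses that $C^{\an}(\Zp,K)$ is of compact type, so that strong duals commute with completed tensor products, and that $\cO$ of a product of quasi-Stein spaces is the completed tensor product of the $\cO$'s), and the rank-one analysis is the standard Amice argument: the norm of $\tbinom{x}{n}$ on the Banach space of functions analytic on cosets of $p^h\Zp$ is $\bigl|\lfloor n/p^h\rfloor!\bigr|^{-1}\approx r_h^{-n}$ with $r_h=p^{-1/(p^h(p-1))}\uparrow 1$, so continuity of $\mu$ on every level is equivalent to $|a_n|\rho^n\to 0$ for all $\rho<1$, i.e.\ to convergence of $\sum a_n w^n$ on the open unit disk. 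Be aware, though, that the entire content of the theorem is concentrated in the two steps you state without proof --- the sharp Mahler-norm estimates and the bicontinuity of the resulting bijection for the strong dual versus the Fr\'echet topology on $\cO(\{|w|<1\})$ --- which is exactly what the references the paper cites actually establish; as written, your argument is an accurate roadmap rather than a self-contained proof. Also, your appeal to Theorem~\ref{thm:AdicCharVariety} is harmless but unnecessary: the identification $\CharVar{\grT}(K)\isom\Hom_{\Zp}(\grT,\Zp)\otimes_{\Zp}B_1(K)$ is stated in the paper independently of (and prior to) that theorem, so there is no circularity.
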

\begin{proof}
See \cite[Theorem 1.3]{Amice78} or \cite[Theorem 2.2]{ST}.
\end{proof}
 We denote by $\mu_f$ the distribution, whose Fourier transform is the function $f\in \sO(\CharVar{\grT})$, so that by definition
\begin{equation}\label{eq:1-int-formula}
    \int_\grT\chi\,d\mu_f=f(\chi).
\end{equation}
The goal of this subsection is to generalize the theorem by Amice to $\Qp$-analytic functions with a differential condition. From now on, let $W\subseteq \Hom_{\Zp}(\grT,L)$ be an $L$-subvector space.
\begin{definition}
	The Fr\'echet algebra of $K$-valued distributions with differential condition $W$ is the continuous dual $D_W(\grT,K):=C^{\an}_W(\grT,K)'$ of the $K$-Banach algebra $C^{\an}_W(\grT,K)$ equipped with the strong dual topology. The product is given by the convolution of distributions.
\end{definition}
The following is a generalization of \cite[Proposition 1.4]{ST} to distributions with differential condition:
\begin{proposition}
For $\lambda \in D_W(\grT,K)$, we have $\lambda=0$ if and only if $F_\lambda|_{\CharVar{\grT}_W(K)}=0$.
\end{proposition}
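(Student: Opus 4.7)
The ``only if'' direction is immediate: every $\chi \in \CharVar{\grT}_W(K)$ actually lies in $C^{\an}_W(\grT,K)$, since by the proof of Lemma~\ref{lem:char-diagram} the condition $d\chi|_0 \in W\otimes_L K$ forces $d\chi = \chi \cdot d\chi|_0 \in C^{\an}(\grT,K)\otimes_K W_K$.

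For the non-trivial direction, the plan is to lift $\lambda$ to a distribution on the ambient space $C^{\an}(\grT,K)$, apply Amice's theorem to get a rigid analytic function on $\CharVar{\grT}$, and then exploit the explicit description of the ideal cutting out $\CharVar{\grT}_W \subset \CharVar{\grT}$. First I would note that $C^{\an}_W(\grT,K) \subset C^{\an}(\grT,K)$ is closed (the differential $d$ is continuous and the finite-dimensional subspace $W\otimes_L K \subset \frt^\vee\otimes_{\Qp}K$ is closed), so the non-archimedean Hahn--Banach theorem yields an extension $\tilde\lambda \in D(\grT,K)$ of $\lambda$. By Amice's Theorem~\ref{thm:Amice}, $\tilde\lambda$ corresponds to a rigid analytic function $F_{\tilde\lambda} \in \cO(\CharVar{\grT}/K)$; by hypothesis, $F_{\tilde\lambda}(\chi) = \tilde\lambda(\chi) = \lambda(\chi) = 0$ for every $\chi \in \CharVar{\grT}_W(K)$, and after base-change to $\Cp$ (to ensure enough points are available) this upgrades to the statement that $F_{\tilde\lambda}$ vanishes as a rigid analytic function on the closed subspace $\CharVar{\grT}_W \subset \CharVar{\grT}$.

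Next I would analyse the vanishing ideal. The cartesian description of Theorem~\ref{thm:AdicCharVariety} realises $\CharVar{\grT}_W$ as the preimage under $\log$ of the linear subspace $W\otimes_L \bbA^1 \subset \Hom_{\Zp}(\grT,L)\otimes_L \bbA^1$, whose vanishing ideal is generated by the linear coordinates dual to a basis $v_1,\ldots,v_r$ of $W^\perp \subset \frt\otimes_{\Qp}L$. Pulled back, the ideal of $\CharVar{\grT}_W$ in $\CharVar{\grT}$ is generated by the rigid analytic functions $\chi \mapsto \langle v_i, \log\chi\rangle = \langle v_i, d\chi|_0\rangle$. Since $\CharVar{\grT}$ is a product of open unit disks, hence Stein, Cartan's Theorem~B promotes local to global generation; thus $F_{\tilde\lambda}$ decomposes as
\begin{equation*}
    F_{\tilde\lambda} = \sum_{i=1}^r \langle v_i, \log(-)\rangle \cdot G_i, \qquad G_i \in \cO(\CharVar{\grT}/K).
\end{equation*}
Amice's isomorphism intertwines multiplication with convolution and sends the distribution $\delta_v\colon f \mapsto (L_v f)(0)$ (with $L_v$ the translation-invariant derivative along $v \in \frt\otimes_{\Qp} L$) to the function $\chi \mapsto \langle v, \log\chi\rangle$. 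Hence $\tilde\lambda = \sum_{i=1}^r \delta_{v_i} * \mu_{G_i}$ in $D(\grT,K)$.

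To conclude, a direct calculation from the definition of convolution of distributions yields $(\delta_v * \mu)(f) = \mu(L_v f)$ for every $\mu \in D(\grT,K)$, $v \in \frt\otimes_{\Qp} L$, and $f \in C^{\an}(\grT,K)$. Applied with $f \in C^{\an}_W(\grT,K)$ and $v_i \in W^\perp$, the defining vanishing $L_{v_i} f \equiv 0$ forces each $(\delta_{v_i} * \mu_{G_i})(f) = 0$; summing, $\lambda(f) = \tilde\lambda(f) = 0$, as desired. The main obstacle will be the geometric/algebraic input just described, namely the identification of the vanishing ideal of $\CharVar{\grT}_W \subset \CharVar{\grT}$ as globally generated by the logarithmic coordinates dual to $W^\perp$; this combines the cartesian description of Theorem~\ref{thm:AdicCharVariety} with the Stein property of the ambient character variety. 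The rest is a formal consequence of Amice's theorem and the translation-invariance of $d$.
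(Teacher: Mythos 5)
Your argument is correct in substance, but it takes a genuinely different route from the paper's. The paper proves the nontrivial direction by a direct density argument entirely inside $C^{\an}_W(\grT,K)$: the functions $h|U$ with $h\in \Sym^\bullet_K(W\otimes_L K)$ and $U\subseteq\grT$ open are dense in $C^{\an}_W(\grT,K)$, and each value $\lambda(h|U)$ is recovered from the values $\lambda(\chi)$, $\chi\in\CharVar{\grT}_W(K)$, by combining finite-character theory (to cut down to $U$) with the expansion of $\chi_{\beta\otimes z}$ as a power series in $z$ and a polarization identity for symmetric powers. In particular the paper needs neither an extension of $\lambda$ to $D(\grT,K)$, nor Amice's theorem, nor the geometry of the closed immersion $\CharVar{\grT}_W\subseteq\CharVar{\grT}$. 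You instead front-load the geometric input --- explicit generators $\langle v_i,\log(-)\rangle$ of the ideal sheaf, the Stein property and Theorem B to globalize, and the dictionary $F_{\delta_v}=\langle v,\log(-)\rangle$, $(\delta_v\ast\mu)(f)=\mu(L_v f)$. This is appealing because it makes transparent \emph{why} the kernel is what it is and essentially establishes Corollary~\ref{cor:Ideal-IW} and a good part of Theorem~\ref{thm:pAdicFourierMainTheoremW} along the way; the cost is that it relies on heavier machinery than the paper's self-contained analytic argument.

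Two steps need more care. First, Hahn--Banach: $\Cp$, and most of the fields $K$ allowed here, are not spherically complete, so Ingleton's theorem does not apply directly; you must invoke the extension property for locally convex spaces of countable type (which $C^{\an}(\grT,K)$ is), or equivalently the surjectivity of $D(\grT,K)\to D_W(\grT,K)$ --- an input the paper's own proof of this Proposition avoids, although it is used implicitly later in the paper. Second, the passage from ``$F_{\tilde\lambda}$ vanishes at the points of $\CharVar{\grT}_W(K)$'' to ``$F_{\tilde\lambda}$ vanishes as a section on $\CharVar{\grT}_W$'' is not automatic: base-changing to $\Cp$ does not manufacture vanishing at characters that are not $K$-rational, and for small $K$ the $K$-points can be genuinely insufficient (for $W=0$, $K=\Qp$, $p$ odd, the only $K$-rational locally constant character is the trivial one). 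The statement really has to be read as vanishing on $\CharVar{\grT}_W(\Cp)$; the paper's proof carries the same subtlety, since the finite-order twists it uses need not be $K$-rational. Once the hypothesis is read this way, reducedness of $\CharVar{\grT}_W$ (it is the pullback of a linear subspace along the \'etale map $\log$) makes your upgrade legitimate, and the remainder of your argument goes through.
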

\begin{proof}
We follow the proof of \cite[Proposition 1.4]{ST}. For any open subset $U\subseteq \grT$ and any function $f$ on $\grT$, let us write $f|U$ for the function which is the extension of $f|_U$ by zero. Any element $h\in \Sym^\bullet_K (W\otimes_LK)$ can be seen as a locally analytic function $h \colon \grT\to K$. In fact, by the definition of $C_W^{\an}(\grT,K)$, the set of functions $h|U$ where $U$ runs through open subsets of $\grT$ and $h$ through $\Sym^\bullet_K( W\otimes_LK)$ is dense in $C_W^{\an}(\grT,K)$.
Suppose now that $F_\lambda|_{\CharVar{\grT}_W(K)}=0$, i.e. that $\lambda(\chi)=0$ for all $\chi\in \CharVar{\grT}_W(K)$. Using the character theory of finite abelian groups, we conclude
\[
	\lambda(\chi|U)=0, \quad \text{ for all open $U\subseteq \grT$ and all } \chi \in  \CharVar{\grT}_W(K).
\]
For any $\beta\in W$ and $z\in K^{\circ\circ}$ of sufficiently small norm, we consider the character $\chi_{\beta\otimes z}\in \CharVar{\grT}_W(K)$, and obtain by continuity for $U\subseteq \grT$ open:
\[
	0=\lambda(\chi_{\beta\otimes z}|U)=\sum_{n\geq 0}\frac{z^n}{n!}\lambda(\beta^n|U).
\]
Since the right hand side is analytic in $z$ in a neighbourhood of $0\in K^{\circ\circ}$, we deduce $\lambda(\beta^n|U)=0$ for all $\beta\in W$, $n\geq 0$ and $U\subseteq \grT$ open. Now note that over a field   $k$ of characteristic zero and a finite dimensional $k$-vector space $V$ any $\Sym^\bullet_k V$ is spanned over $k$ by the set $\beta^n$ with $\beta \in V$ and $n\geq 0$, see for example \cite[Theorem 1]{Schinzel}. Applying this to $k=K$ and $V=W\otimes_LK$, we deduce that $\lambda(h|U)=0$ for any $h\in \Sym^\bullet_K( W\otimes_LK)$ and any $U\subseteq \grT$ open. Since these functions are dense in $C_W^{\an}(\grT,K)$, we deduce $\lambda=0$.
\end{proof}
An immediate consequence of this Proposition is the following Corollary:
\begin{corollary}\label{cor:Ideal-IW}
	Let $W\subseteq \Hom_{\Zp}(\grT,L)$ be a $L$-subvector space. We define
	\[
		I_W:=\ker( D(\grT,K)\to D_W(\grT,K) ),
	\]
	where the surjection is induced by the inclusion $C^{\an}_W(\grT,K)\subseteq C^{\an}(\grT,K)$. Then
	\[
		I_W=\{\lambda\in D(\grT,K): F_\lambda|_{\CharVar{\grT}_W(K)}=0\}.
	\]
\end{corollary}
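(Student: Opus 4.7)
The plan is to deduce the Corollary directly from the preceding Proposition by unwinding definitions, which makes it essentially formal. The key observation is that the surjection $D(\grT,K)\twoheadrightarrow D_W(\grT,K)$ appearing in the statement is precisely the restriction of distributions along the inclusion $C^{\an}_W(\grT,K)\hookrightarrow C^{\an}(\grT,K)$, since taking continuous duals of this topological inclusion is exactly what gives the quotient map on distribution algebras.

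First I would write $\bar\lambda\in D_W(\grT,K)$ for the image of a given $\lambda\in D(\grT,K)$. Then by the description of the surjection above,
\[
\bar\lambda(f)=\lambda(f)\quad\text{for all }f\in C^{\an}_W(\grT,K).
\]
In particular, since every $\chi\in \CharVar{\grT}_W(K)$ lies in $C^{\an}_W(\grT,K)$ by definition of the character group with differential condition $W$, one has
\[
F_{\bar\lambda}(\chi)=\bar\lambda(\chi)=\lambda(\chi)=F_\lambda(\chi),\qquad \chi\in \CharVar{\grT}_W(K).
\]
Thus the restriction of $F_\lambda$ to $\CharVar{\grT}_W(K)$ agrees with $F_{\bar\lambda}$, viewed as a function on the same set.

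Finally, I would apply the preceding Proposition to $\bar\lambda\in D_W(\grT,K)$, which gives $\bar\lambda=0$ if and only if $F_{\bar\lambda}|_{\CharVar{\grT}_W(K)}=0$. Chaining the equivalences,
\[
\lambda\in I_W\iff \bar\lambda=0 \iff F_{\bar\lambda}|_{\CharVar{\grT}_W(K)}=0 \iff F_\lambda|_{\CharVar{\grT}_W(K)}=0,
\]
which is the desired identification of $I_W$. There is no real obstacle here: the entire substance of the argument is already contained in the Proposition, and all that remains is to identify the restriction map on distributions with the natural map appearing in the definition of $I_W$ and to match $F_\lambda|_{\CharVar{\grT}_W(K)}$ with $F_{\bar\lambda}|_{\CharVar{\grT}_W(K)}$; both matches are immediate from the defining evaluation pairings.
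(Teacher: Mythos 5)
Your proof is correct and is exactly the argument the paper intends: the corollary is stated as an immediate consequence of the preceding Proposition, and your unwinding (identifying the map $D(\grT,K)\to D_W(\grT,K)$ with restriction of functionals, matching $F_{\bar\lambda}$ with $F_\lambda$ on $\CharVar{\grT}_W(K)$, and then applying the Proposition to $\bar\lambda$) is the intended chain of equivalences. No gaps.
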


As we have identified the Lie algebra $\frt$ of $\grT$ with $\grT\otimes_{\Zp}\Qp$, we have an action of $\frt=\grT\otimes_{\Zp}\Qp$ on $C^{\an}(\grT,K)$  in the usual way: for $X\in \frt$ we let
\begin{equation*}
    X:C^{\an}(\grT,K)\to C^{\an}(\grT,K)
\end{equation*}
be the composition of 
\begin{equation*}
    	d\colon C^{\an}(\grT,K)\to C^{\an}(\grT,K)\otimes_K \Hom_{\Zp}(\grT,K)
\end{equation*}
with the evaluation map 
\begin{equation*}
    \Hom_{\Zp}(\grT,K)\times \frt\to K. 
\end{equation*}
The resulting action of $X\in \frt$ on $f\in C^{\an}(\grT,K)$ is denoted by $X.f$. This action allows us to define a map
\[
	\iota\colon \frt \to D(\sG,K),\quad X\mapsto (f\mapsto (X.f)(0)).
\]
\begin{lemma}\label{lem:eq-for-charW}
	Let $W\subseteq \Hom_{\Zp}(\grT,L)=\Hom_{\Qp}(\frt,L)$ be a $L$-subvector space, and consider the dual of $W$ in $\frt$:
	 \[
	 W^\perp:=\{ x\in \frt: w(x)=0 \text{ for all } w\in W  \}\subseteq \frt
	 \] 
	 We have
	 \[
		\CharVar{\grT}_W(K)=\{\chi\in \CharVar{\grT}(K): F_{\iota(X)}(\chi)=0 \text{ for all } X\in W^\perp\}.
	\]
\end{lemma}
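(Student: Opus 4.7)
The plan is to reduce the lemma, via Lemma~\ref{lem:char-diagram}, to a purely linear-algebraic statement about the differential $(d\chi)|_0$ at the origin, and then to invoke the double-annihilator theorem.

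First, I unfold $F_{\iota(X)}(\chi)$ explicitly. For $X\in\frt$ and $\chi\in\CharVar{\grT}(K)$, the definitions of $\iota$ and the $\frt$-action on $C^{\an}(\grT,K)$, together with the Leibniz-type identity $(d\chi)|_g=\chi(g)\cdot(d\chi)|_0$ already used in the proof of Lemma~\ref{lem:char-diagram}, give
\[
F_{\iota(X)}(\chi)\;=\;\iota(X)(\chi)\;=\;(X.\chi)(0)\;=\;\langle (d\chi)|_0,\,X\rangle,
\]
where $\langle -,-\rangle\colon \Hom_{\Qp}(\frt,K)\times\frt\to K$ is the evaluation pairing. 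Consequently, the condition ``$F_{\iota(X)}(\chi)=0$ for all $X\in W^\perp$'' is equivalent to the assertion that $(d\chi)|_0$ vanishes identically on $W^\perp$.

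On the other hand, Lemma~\ref{lem:char-diagram} identifies $\CharVar{\grT}_W(K)$ with the set of $\chi\in\CharVar{\grT}(K)$ for which $(d\chi)|_0$ lies in the $K$-subspace $W\otimes_L K\subseteq \Hom_{\Qp}(\frt,K)$. The lemma therefore reduces to a purely linear-algebraic claim: a functional $\phi\in\Hom_{\Qp}(\frt,K)$ lies in $W\otimes_L K$ if and only if $\phi(X)=0$ for all $X\in W^\perp$. The forward direction is immediate: writing $\phi=\sum_i k_i w_i$ with $k_i\in K$ and $w_i\in W$, one has $\phi(X)=\sum_i k_i\,w_i(X)=0$ for any $X\in W^\perp$ by definition.

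The converse is the double-annihilator theorem applied to the perfect $K$-bilinear pairing $\frt_K\times\Hom_{\Qp}(\frt,K)\to K$: any finite-dimensional $K$-subspace of $\Hom_{\Qp}(\frt,K)$ is recovered as the set of functionals vanishing on its own $K$-annihilator in $\frt_K=\frt\otimes_{\Qp}K$. The main obstacle is precisely the verification that $W^\perp$, after extension of scalars to $K$, coincides with the $K$-annihilator of $W\otimes_L K$ in $\frt_K$—a standard but delicate piece of finite-dimensional linear algebra, sensitive to the interplay between the three base fields $\Qp\subseteq L\subseteq K$. Once this identification is in hand, the double-annihilator principle concludes the argument.
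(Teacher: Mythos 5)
Your reduction is sound and is in fact exactly the paper's argument: the paper's own proof is the chain of equivalences $F_{\iota(X)}(\chi)=0\ \forall X\in W^\perp \Leftrightarrow X.\chi=0\ \forall X\in W^\perp \Leftrightarrow (d\chi)|_0\in W \Leftrightarrow \chi\in\CharVar{\grT}_W(K)$, whose outer steps are your computation $F_{\iota(X)}(\chi)=\langle (d\chi)|_0,X\rangle$ and Lemma \ref{lem:char-diagram}. The problem is the middle step, which you correctly isolate as the crux and then defer: the identification of $W^\perp$, after extension of scalars, with the $K$-annihilator of $W\otimes_LK$ is not merely ``delicate'' --- it is false for $W^\perp$ as defined in the statement, namely as an annihilator inside the $\Qp$-vector space $\frt$. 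Take $L\neq \Qp$ finite over $\Qp$, $\grT=\sO_L$, and $W=\Hom_L(L,L)=L\cdot \id_L\subseteq \Hom_{\Qp}(L,L)$ (the Schneider--Teitelbaum case of Example \ref{exmpl:DistrL}). For $x\in\frt=L$ one has $\lambda x=0$ for all $\lambda\in L$ only if $x=0$, so $W^\perp=\{0\}$, the right-hand side of the lemma is all of $\CharVar{\grT}(K)$, while the left-hand side is the proper subgroup of locally $L$-analytic characters. Your forward direction is fine, but the converse cannot be completed as stated: the space of $\phi$ vanishing on $W^\perp$ has $K$-dimension $\dim_{\Qp}\frt-\dim_{\Qp}W^\perp$, which need not equal $\dim_K(W\otimes_LK)=\dim_LW$.

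What is actually needed (and what the paper silently uses --- see the remark following the lemma, which speaks of an $L$-basis of $W^\perp$) is the annihilator taken inside $\frt\otimes_{\Qp}L$, i.e.\ $W^\perp=\{x\in\frt\otimes_{\Qp}L : w(x)=0 \text{ for all } w\in W\}$ for the perfect $L$-bilinear pairing $\Hom_{\Qp}(\frt,L)\times(\frt\otimes_{\Qp}L)\to L$, with $\iota$ extended $L$-linearly to $\frt\otimes_{\Qp}L\to D(\grT,K)$. With that reading your double-annihilator argument closes immediately: $(W^\perp)^\perp=W$ over $L$ by perfectness and finite-dimensionality, and this is preserved under the base change $-\otimes_LK$, so that vanishing on $W^\perp$ characterizes $W\otimes_LK$ and the lemma follows from Lemma \ref{lem:char-diagram}. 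As written, however, the one step of your proof that requires an argument is precisely the one you have omitted, and it does not hold; you should either prove the lemma in the corrected form above or record that the statement with $W^\perp\subseteq\frt$ fails.
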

\begin{proof}
This follows from the following equivalences for a character $\chi\in \CharVar{\grT}$:
\begin{align*}
	&F_{\iota(X)}(\chi)=0 \quad \forall X\in W^\perp\\
	\Leftrightarrow& X.\chi=0, \quad \forall X\in W^\perp\\
	\Leftrightarrow& (d\chi)|_0\in W \\
	\Leftrightarrow& \chi \in \CharVar{\grT}_W(K),
\end{align*}
where the last equivalence follows from Lemma \ref{lem:char-diagram}.
\end{proof}

\begin{remark}
We already know from Theorem \ref{thm:AdicCharVariety} that $\CharVar{\grT}_W$ is a closed reduced rigid analytic subspace of the character variety $\CharVar{\grT}$. Let us choose an $L$-basis $X_1,\dots,X_r$ of $W^\perp$.
 Lemma \ref{lem:eq-for-charW} tells us more explicitly that $\CharVar{\grT}_W$ is cut out by the equations 
 \[
 F_{\iota(X_1)}=0,\dots, F_{\iota(X_r)}=0.
 \]
\end{remark}

We are now ready to prove a generalization of the theorem of Amice under a differential condition $W$:
\begin{theorem}\label{thm:pAdicFourierMainTheoremW}
	The Fourier transform induces an isomorphism of Fr\'echet algebras
	\[
		F_W\colon D_W(\grT,K) \xrightarrow{\sim} \cO(\CharVar{\grT}_W/K), \quad \lambda\mapsto F_\lambda|_{\CharVar{\grT}_W},
	\]
	which fits into a commutative diagram
	\[
		\xymatrix{
			F\colon D(\grT,K) \ar[r]^{\cong}\ar[d] & \cO(\CharVar{\grT}/K)\ar[d] \\
			F_W\colon D_W(\grT,K) \ar[r]^{\cong} & \cO(\CharVar{\grT}_W/K).
		}
	\]
	Here, the left vertical map is induced by the inclusion $C_W^{\an}(\grT,K)\subseteq C^{\an}(\grT,K)$ and the right vertical map is induced by the closed immersion $\CharVar{\grT}_W\subseteq \CharVar{\grT}$.
\end{theorem}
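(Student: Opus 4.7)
The plan is to derive $F_W$ from the Amice isomorphism $F\colon D(\grT,K)\xrightarrow{\sim}\cO(\CharVar{\grT}/K)$ of Theorem~\ref{thm:Amice} by passing to an appropriate quotient on both sides. Let $\phi\colon D(\grT,K)\to D_W(\grT,K)$ be the map induced by the inclusion $C^{\an}_W(\grT,K)\subseteq C^{\an}(\grT,K)$, whose kernel is $I_W$ by Corollary~\ref{cor:Ideal-IW}, and let $\rho\colon \cO(\CharVar{\grT}/K)\to \cO(\CharVar{\grT}_W/K)$ be the restriction along the closed immersion $\CharVar{\grT}_W\hookrightarrow \CharVar{\grT}$ coming from the cartesian diagram of Theorem~\ref{thm:AdicCharVariety}. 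For $\lambda\in D(\grT,K)$ and $\chi\in \CharVar{\grT}_W(K)$, the computation $F_{\phi(\lambda)}(\chi)=\phi(\lambda)(\chi)=\lambda(\chi)=F_\lambda(\chi)$ immediately yields $F_W\circ \phi=\rho\circ F$, which supplies the commutative square of the statement as soon as $F_W$ is known to be bijective.

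Injectivity of $F_W$ is exactly the preceding Proposition. The main obstacle is surjectivity, for which the crucial input is that $\CharVar{\grT}$ is a Stein space: after choosing a $\Zp$-basis of $\grT$, Theorem~\ref{thm:AdicCharVariety} identifies $\CharVar{\grT}$ with a finite product of copies of the open unit disc $(\Gmf)_\eta^{\rig}$ around $1$, which admits an affinoid exhaustion. Kiehl's Theorem~B applied to the coherent ideal sheaf cutting out the closed subspace $\CharVar{\grT}_W$ (by Lemma~\ref{lem:eq-for-charW} generated by the finitely many explicit sections $F_{\iota(X_1)},\dots,F_{\iota(X_r)}$) gives surjectivity of $\rho$ with kernel the ideal $J_W\subseteq\cO(\CharVar{\grT}/K)$ of functions vanishing on $\CharVar{\grT}_W$. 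By Corollary~\ref{cor:Ideal-IW}, $F$ identifies $J_W$ with $I_W=\ker(\phi)$, so $\rho\circ F$ descends to an isomorphism $D(\grT,K)/I_W\xrightarrow{\sim} \cO(\CharVar{\grT}_W/K)$. Combined with the injectivity of $F_W$, this forces both that $\phi$ is surjective and that $F_W$ is a bijection.

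It remains to verify the algebra and topological structure. That $\phi$, and hence $F_W$, is multiplicative follows from the observation that pullback under addition $\grT\times \grT\to \grT$ preserves the differential condition $W$, so convolution on $D(\grT,K)$ restricts to convolution on $D_W(\grT,K)$; multiplicativity of $F_W$ is then inherited from $F$ via the identity $F_W\circ\phi=\rho\circ F$ and the surjectivity of $\phi$. Continuity of $F_W$ is inherited in the same way from continuity of $F$, $\phi$ and $\rho$, and since both $D_W(\grT,K)$ and $\cO(\CharVar{\grT}_W/K)$ are Fréchet spaces the non-archimedean open mapping theorem promotes the continuous bijection $F_W$ to a topological isomorphism, completing the proof.
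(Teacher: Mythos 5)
Your proposal is correct and follows essentially the same route as the paper: both reduce the statement to the Amice isomorphism of Theorem~\ref{thm:Amice}, identify the global sections of the vanishing ideal sheaf of the reduced closed subvariety $\CharVar{\grT}_W\subseteq\CharVar{\grT}$ with $F(I_W)$ via Corollary~\ref{cor:Ideal-IW}, and use the Stein property of $\CharVar{\grT}$ to pass to global sections. Your argument is in fact slightly more careful on one point the paper leaves implicit, namely the surjectivity of $D(\grT,K)\to D_W(\grT,K)$, which you deduce from the injectivity of $F_W$ rather than asserting it outright.
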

\begin{proof}
By \cite[9.5.2, Cor. 6]{BGR} and \cite[9.5.3, Prop. 4]{BGR}, the coherent sheaf $\cI_W\subseteq \cO_{\CharVar{\grT}}$ corresponding to the closed reduced subvariety $\CharVar{\grT}_W\subseteq \CharVar{\grT}$ consists of all local sections vanishing on the subvariety $\CharVar{\grT}_W\subseteq \CharVar{\grT}$. By Theorem \ref{thm:Amice} (the Theorem of Amice) and Corollary \ref{cor:Ideal-IW}, we have $\cI_W(\CharVar{\grT})=F(I_W)$. Since $\CharVar{\grT}$ is a Stein space, the global section functor is exact on coherent sheaves and we get
\[
	D_W(\grT,K)=D(\grT,K)/I_W\xrightarrow{\sim}\cO(\CharVar{\grT}/K)/\cI(\CharVar{\grT})\cong \cO(\CharVar{\grT}_W).
\]
\end{proof}

Of course, the case $W=\Hom_{\Zp}(\grT,L)$ gives just back the theorem of Amice:
\begin{corollary}[Amice]
	Let $T$ be a finite free $\Zp$-module and set $W:=\Hom_{\Zp}(T,L)$. In this case, the isomorphism $F_W$ coincides with the isomorphism of Amice:
\[
	 D(T,K)\xrightarrow{\sim} \cO(\CharVar{T}/K).
\]
between locally $L$-analytic distributions and the corresponding character variety.
\end{corollary}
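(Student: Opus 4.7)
The plan is to observe that when $W=\Hom_{\Zp}(\grT,L)$ the differential condition is vacuous, so every object appearing in Theorem~\ref{thm:pAdicFourierMainTheoremW} reduces to its unconditioned counterpart, and the statement becomes a bookkeeping matter built directly on top of Theorem~\ref{thm:Amice}.

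First, I would invoke Example~\ref{exmpl:DistrQp}, which records that for this choice of $W$ one has $C^{\an}_W(\grT,K)=C^{\an}(\grT,K)$ as topological $K$-vector spaces (the condition $d\chi\in C^{\an}(\grT,K)\otimes_L W$ becomes automatic). Dualising with the strong topology and carrying along the convolution product yields the identification $D_W(\grT,K)=D(\grT,K)$ of Fr\'echet algebras. This handles the left-hand side of the Fourier isomorphism.

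Second, I would verify the corresponding identification of character varieties, not just of their $K$-points. Plugging $W=\Hom_{\Zp}(\grT,L)$ into the cartesian diagram~\eqref{eq:FiberProd} of Theorem~\ref{thm:AdicCharVariety}, the bottom horizontal arrow $W\otimes_L\bbA^1\to\Hom_{\Zp}(\grT,L)\otimes_L\bbA^1$ is the identity. The fiber product therefore collapses, giving an isomorphism of rigid analytic groups over $L$
\[
\CharVar{\grT}_W \;=\; \Hom_{\Zp}(\grT,\Zp)\otimes_{\Zp}(\Gmf)_\eta^{\rig} \;=\; \CharVar{\grT},
\]
and hence $\cO(\CharVar{\grT}_W/K)=\cO(\CharVar{\grT}/K)$.

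Third, I would read off the conclusion from the commutative square in Theorem~\ref{thm:pAdicFourierMainTheoremW}. Under the two identifications just made, the left vertical arrow (induced by $C^{\an}_W\subseteq C^{\an}$) and the right vertical arrow (induced by the closed immersion $\CharVar{\grT}_W\subseteq\CharVar{\grT}$) are both the identity, so $F_W=F$ is precisely the isomorphism of Amice recalled in Theorem~\ref{thm:Amice}. The only genuine input is Amice's theorem itself; I do not anticipate any substantive obstacle, as the corollary is tautological once Example~\ref{exmpl:DistrQp} and the construction of $\CharVar{\grT}_W$ via~\eqref{eq:FiberProd} are in place.
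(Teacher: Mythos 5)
Your proposal is correct and matches the paper's intent exactly: the paper states this corollary without a separate proof, treating it as immediate from Example~\ref{exmpl:DistrQp} (which makes the differential condition vacuous, so $D_W(\grT,K)=D(\grT,K)$), the collapse of the fiber product~\eqref{eq:FiberProd} to $\CharVar{\grT}$, and the commutative square in Theorem~\ref{thm:pAdicFourierMainTheoremW}. Your write-up simply makes these tautological identifications explicit, with no gaps.
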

In the case $W=\{0\}$, we recover the well-known result that the algebra of locally-constant $K$-valued distributions on $T$ is just the completed group ring:
\begin{corollary}
	Let $T$ be a finite free $\Zp$-module and set $W:=\{0\}$. In this case, the isomorphism $F_W$ gives:
\[
	 D^{\text{lc}}(T,K)\xrightarrow{\sim} K\llbracket T\rrbracket,
\]
where $K\llbracket T\rrbracket$ is the completed group ring of $T$.
\end{corollary}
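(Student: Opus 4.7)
The plan is to reduce directly to Theorem~\ref{thm:pAdicFourierMainTheoremW} and then identify the distribution side with the completed group ring by a standard duality argument. By Example~\ref{exmpl:DistrTriv}, $C_{\{0\}}^{\an}(T,K) = C^{\text{lc}}(T,K)$, so the theorem already provides an isomorphism of Fr\'echet algebras
\[
F_{\{0\}}\colon D^{\text{lc}}(T,K) \xrightarrow{\sim} \cO(\CharVar{T}_{\{0\}}/K).
\]
It therefore suffices to produce a natural identification $D^{\text{lc}}(T,K) \cong K\llbracket T\rrbracket$ of topological $K$-algebras.

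To do so, I would first observe that, since $T$ is a compact totally disconnected group, every locally constant $K$-valued function on $T$ factors through some finite quotient $T/p^n T$. This gives
\[
C^{\text{lc}}(T,K) \;=\; \varinjlim_n \Map(T/p^n T,\,K),
\]
a strict inductive limit of finite-dimensional Banach spaces with closed inclusions as transition maps. Passing to strong duals turns the injective limit of Banach spaces into the projective limit of their duals, so
\[
D^{\text{lc}}(T,K) \;=\; \varprojlim_n \Map(T/p^n T,K)' \;\cong\; \varprojlim_n K[T/p^n T] \;=\; K\llbracket T\rrbracket,
\]
where the middle isomorphism sends $[g] \in K[T/p^n T]$ to the Dirac functional $\delta_g\colon f \mapsto f(g)$.

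The remaining step is to check compatibility of the algebra structures: convolution of Dirac distributions satisfies $\delta_g \ast \delta_h = \delta_{g+h}$, which matches the group-ring product $[g]\cdot [h] = [g+h]$, and this identification is preserved under the inverse limit. Composition then gives the desired isomorphism, under which a class $[g] \in K\llbracket T\rrbracket$ is sent by $F_{\{0\}}$ to the function $\chi \mapsto \chi(g)$ on $\CharVar{T}_{\{0\}}$. The only mildly delicate point is the functional-analytic one of confirming that the strong dual of the strict LB-space $C^{\text{lc}}(T,K)$ is indeed the Fr\'echet algebra $\varprojlim_n K[T/p^n T]$; once the LB-structure on $C^{\text{lc}}(T,K)$ is in place this is a routine consequence of the duality between projective and inductive limits of finite-dimensional Banach spaces, and I do not expect any serious obstruction.
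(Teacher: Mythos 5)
Your proposal is correct, but it attacks the identification with $K\llbracket T\rrbracket$ from the opposite side compared to the paper. The paper works geometrically on the target of $F_W$: since every locally constant character factors through some $T/p^nT$, it writes $\CharVar{T}^{\text{lc}}=\varinjlim_n \Sp(K[T/p^nT])$ as an increasing union of finite affinoids and reads off $\cO(\CharVar{T}_{\{0\}}/K)=\varprojlim_n K[T/p^nT]=K\llbracket T\rrbracket$; Theorem~\ref{thm:pAdicFourierMainTheoremW} then transports this to the distribution side. You instead work functional-analytically on the source: $C^{\text{lc}}(T,K)=\varinjlim_n\Map(T/p^nT,K)$, dualize to get $D^{\text{lc}}(T,K)\cong\varprojlim_n K[T/p^nT]$ via Dirac functionals, and check that convolution matches the group-ring product. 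Both arguments are valid and of essentially the same length; yours has the advantage of being independent of the Fourier theory (it exhibits the classical fact that the algebra of locally constant distributions \emph{is} the Iwasawa algebra, with $F_{\{0\}}$ sending $\delta_g$ to $\chi\mapsto\chi(g)$), while the paper's has the advantage of directly describing the rigid space $\CharVar{T}_{\{0\}}$ and its global sections, which is the object the rest of the paper cares about. The one point you should make explicit if you want your isomorphism to be literally ``the isomorphism $F_W$'' as the corollary asserts is the compatibility you only sketch at the end: under your duality identification and the paper's geometric one, $F_{\{0\}}$ becomes the identity on $\varprojlim_n K[T/p^nT]$, which follows from the definition $F_\lambda(\chi)=\lambda(\chi)$ applied to $\lambda=\delta_g$. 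The functional-analytic point you flag (strong dual of a strict inductive limit of finite-dimensional spaces is the projective limit of the duals) is indeed routine and poses no obstruction.
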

\begin{proof}
Every locally constant character $\chi\in \CharVar{T}^{\text{lc}}(K)=\CharVar{T}_{\{0\}}(K)$ factors through $T/p^nT$ for some $n$. This shows that
\[
	\CharVar{T}^{\text{lc}}=\varinjlim_n \Sp(K[T/p^n T]),
\]
where $K[T/p^n T]$ is just the group ring of $T/p^nT$ over $K$, and the corollary follows.
\end{proof}

The main theorem of locally $L$-analytic $p$-adic Fourier theory is obtained as the special case with $\grT=\sO_L$ and $W=\Hom_{L}(L,K)\subseteq \Hom_{\Qp}(L,K)$:

\begin{corollary}[{\cite[Theorem 2.3]{ST}}]
Consider a finite field extension $L$ of $\Qp$, and set $T:=\sO_L$ and $W:=\Hom_{L}(L,L)\subseteq \Hom_{\Qp}(L,L)$. In this case, the isomorphism $F_W$ coincides with the isomorphism in \cite[Theorem 2.3]{ST}:
\[
	 D^{\Lan}(\sO_L,K)\xrightarrow{\sim} \cO(\CharVar{(\sO_L)}^{\Lan}/K).
\]
between locally $L$-analytic distributions and the corresponding character variety.
\end{corollary}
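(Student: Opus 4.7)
The plan is to observe that this corollary is essentially a matter of unwinding definitions on top of Theorem \ref{thm:pAdicFourierMainTheoremW}. First I would recall Example \ref{exmpl:DistrL}, where we already identified $C_W^{\an}(\sO_L,K) = C^{\Lan}(\sO_L,K)$ and $\CharVar{(\sO_L)}_W(K) = \CharVar{(\sO_L)}^{\Lan}(K)$ for precisely the choice $W = \Hom_L(L,L) \subseteq \Hom_{\Qp}(L,L)$. Dualizing the first equality (with the strong dual topology) gives an equality of Fr\'echet algebras $D_W(\sO_L,K) = D^{\Lan}(\sO_L,K)$.

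Next I would verify that the rigid analytic space $\CharVar{(\sO_L)}_W/L$ produced by Theorem \ref{thm:AdicCharVariety} has the same $K$-points as the Schneider--Teitelbaum character variety $\CharVar{(\sO_L)}^{\Lan}$ for every complete $K$ containing $L$, and that both are reduced rigid analytic subspaces of $\CharVar{(\sO_L)}$ cut out by the same differential condition. Since both are reduced and agree on $K$-valued points for all $K$, they coincide as rigid analytic subvarieties of $\CharVar{(\sO_L)}$, so their structure sheaves give the same ring of global sections $\cO(\CharVar{(\sO_L)}_W/K) = \cO(\CharVar{(\sO_L)}^{\Lan}/K)$.

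With these identifications in hand, Theorem \ref{thm:pAdicFourierMainTheoremW} yields an isomorphism of Fr\'echet algebras
\[
F_W \colon D^{\Lan}(\sO_L,K) \xrightarrow{\sim} \cO(\CharVar{(\sO_L)}^{\Lan}/K),
\]
and it only remains to check that this map agrees with the Schneider--Teitelbaum Fourier transform. But both maps are defined by $\lambda \mapsto (\chi \mapsto \lambda(\chi))$ on locally $L$-analytic characters $\chi$, so they coincide by construction.

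The only step that is not purely formal is the matching of the two rigid analytic structures on the character variety; the potential obstacle is that in \cite{ST} the space $\CharVar{(\sO_L)}^{\Lan}$ is defined via Lubin--Tate theory rather than as a fiber product, so one should justify the identification on the level of rigid spaces and not just points. However, since both are reduced closed subspaces of the ambient variety $\CharVar{(\sO_L)}$ whose $K$-points coincide for all complete $L \subseteq K \subseteq \Cp$, they must agree, and the commutative diagram in Theorem \ref{thm:pAdicFourierMainTheoremW} ensures compatibility with Amice's isomorphism.
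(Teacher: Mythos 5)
Your proposal is correct and follows the same route as the paper, which simply combines Example \ref{exmpl:DistrL} (identifying $C_W^{\an}(\sO_L,K)=C^{\Lan}(\sO_L,K)$ and $\CharVar{(\sO_L)}_W(K)=\CharVar{(\sO_L)}^{\Lan}(K)$ for $W=\Hom_L(L,L)$) with Theorem \ref{thm:pAdicFourierMainTheoremW}. The extra care you take in matching the two rigid analytic structures and the two Fourier transforms is a reasonable elaboration of what the paper leaves implicit, and your argument for it (both are reduced closed subvarieties of $\CharVar{(\sO_L)}$ with the same points, and both transforms are $\lambda\mapsto(\chi\mapsto\lambda(\chi))$) is sound.
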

\begin{proof}
This follows from Theorem \ref{thm:pAdicFourierMainTheoremW} and Example \ref{exmpl:DistrL}.
\end{proof}

Let us come back to the general setup $\Qp\subseteq L\subseteq K\subseteq \Cp$, $T$ a finite free $\Zp$-module and $W\subseteq \Hom_{\Zp}(T,L)$. For $f\in \cO(\CharVar{\grT}_W/K)$, let us write $\mu_f\in D_W(\grT,K)$ for the corresponding distribution with $F_W(\mu_f)=f$. By Theorem \ref{thm:AdicCharVariety}, the Lie algebra of the rigid analytic variety $\CharVar{\grT}_W/K$ is given by $W_K:=W\otimes_L K$, and $\beta \in W_K$ acts on $\cO(\CharVar{\grT}_W/K)$ as a linear invariant differential operator:
\[
	\beta\colon \cO(\CharVar{\grT}_W/K) \xrightarrow{d} \cO(\CharVar{\grT}_W/K)\otimes_K \Hom_K(W_K,K) \xrightarrow{\text{eval}_\beta} \cO(\CharVar{\grT}_W/K),\quad f\mapsto \beta.f.
\]
On the other hand, $\beta\in W_K$ can be seen as an element $C^{\an}_W(\grT,K)$. These two different interpretations of $W_K$ allow us to view an element $X\in \Sym_K^\bullet W_K$ in two different ways:
\begin{enumerate}
\item as an invariant differential operator $\cO(\CharVar{\grT}_W/K) \to \cO(\CharVar{\grT}_W/K), f\mapsto X.f$,
\item as an element $X\in C^{\an}_W(\grT,K)$.
\end{enumerate}
The second interpretation induces for each $X\in \Sym_K^\bullet W_K$ a $K$-linear map
\[
	\mult_X^*\colon D_W(\grT,K)\to D_W(\grT,K),
\]
which is dual to the multiplication map 
\[
	\mult_X\colon C^{\an}_W(\grT,K)\to C^{\an}_W(\grT,K), f\mapsto X\cdot f.
\]
These two interpretations are related by the following integration formula:
\begin{proposition}
	For $f\in \cO(\CharVar{\grT}_W/K)$ and $X\in \Sym_K^\bullet W_K$, we have:
	\[
		\mult_X^*\mu_f=\mu_{X.f},
	\]
	or stated differently: For every $\chi \in \CharVar{\grT}_W(K)$, we have:
	\begin{equation*}
	\int_{\grT} \chi\cdot X\,d\mu_f=(X.f)(\chi).
	\end{equation*}
\end{proposition}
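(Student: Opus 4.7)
The plan is to reduce the Proposition to the displayed integration formula on characters, then prove that formula by analytic expansion along one-parameter subgroups of $\CharVar{\grT}_W$. The two assertions are equivalent: evaluating $\mult_X^*\mu_f=\mu_{X.f}$ on any $\chi\in\CharVar{\grT}_W(K)$ yields exactly $\int_\grT\chi\cdot X\,d\mu_f=(X.f)(\chi)$, and by the Proposition on separation of distributions by characters (the one just before Corollary~\ref{cor:Ideal-IW}) an element of $D_W(\grT,K)$ is determined by its values on $\CharVar{\grT}_W(K)$; so it suffices to establish the integration formula for every such $\chi$.

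For the base case $X=\beta\in W_K$, fix $\chi$ and restrict to $t\in K$ with $|t|$ small enough that $e^t\in B_1(K)$. Then $\chi_{\beta\otimes e^t}(g)=e^{t\beta(g)}$ is a character in $\CharVar{\grT}_W(K)$ (its differential at $0$ is $t\beta\in W_K$), and so is $\chi\cdot\chi_{\beta\otimes e^t}$. The exponential series
\[
    \chi\cdot e^{t\beta}=\sum_{k\geq 0}\frac{t^k}{k!}\,\chi\cdot\beta^k
\]
converges in the Fr\'echet space $C^{\an}_W(\grT,K)$ on such a disc, so continuity of $\mu_f$ together with the defining formula~\eqref{eq:1-int-formula} applied to the character $\chi\cdot\chi_{\beta\otimes e^t}$ gives the identity of rigid analytic functions of $t$
\[
    \sum_{k\geq 0}\frac{t^k}{k!}\,\mu_f(\chi\cdot\beta^k)=\mu_f(\chi\cdot e^{t\beta})=f(\chi\cdot\chi_{\beta\otimes e^t}).
\]
The right hand side is analytic in $t$; the curve $t\mapsto\chi\cdot\chi_{\beta\otimes e^t}$ in $\CharVar{\grT}_W$ passes through $\chi$ at $t=0$ with tangent vector $\beta$ in the Lie algebra $W_K$ of $\CharVar{\grT}_W/K$, so its Taylor expansion equals $\sum_{k\ge 0}(t^k/k!)\,(\beta^k.f)(\chi)$. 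Matching coefficients in $t$ yields $\mu_f(\chi\cdot\beta^k)=(\beta^k.f)(\chi)$ for every $k\ge 0$.

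For general $X$, I apply the same expansion to the multi-parameter family $\chi\cdot\prod_{i=1}^n e^{t_i\beta_i}$ with $\beta_1,\ldots,\beta_n\in W_K$ and extract the coefficient of $t_1\cdots t_n$ (the order of differentiation being immaterial because $\CharVar{\grT}_W$ is a commutative rigid group), obtaining
\[
    \mu_f(\chi\cdot\beta_1\cdots\beta_n)=((\beta_1\cdots\beta_n).f)(\chi).
\]
Since $\Sym_K^\bullet W_K$ is $K$-spanned by such monomials and both sides of the desired integration formula are $K$-linear in $X$, the Proposition follows. The one genuinely technical step is the interchange of the distribution pairing with the sum defining the exponential; this rests on the convergence of $\chi\cdot e^{t\beta}$ in the Fr\'echet topology of $C^{\an}_W(\grT,K)$ together with the continuity of $\mu_f$ there, and is a routine consequence of the definition of $D_W(\grT,K)$ as the strong dual.
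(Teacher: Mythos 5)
Your proof is correct, but it takes a genuinely different route from the paper. The paper's argument is a two-line descent: the identity is quoted from the classical case $W=\Hom_{\Zp}(\grT,L)$ (Schneider--Teitelbaum, Lemma 4.6(8), via the Amice transform), and then transported to general $W$ using Theorem \ref{thm:pAdicFourierMainTheoremW} together with the observation that $\mult_X^*$ preserves the ideal $I_W$ and corresponds to the invariant operator $X.$ on $F(I_W)$, so that everything descends to the quotients $D_W(\grT,K)=D(\grT,K)/I_W\cong \cO(\CharVar{\grT}_W/K)$. You instead give a direct, self-contained verification: reduce to the pointwise integration formula via the separation proposition preceding Corollary \ref{cor:Ideal-IW}, then prove it by expanding the exponential along the one-parameter subgroup $t\mapsto \chi_{\beta\otimes e^t}$ and matching Taylor coefficients, which in effect reproves the Schneider--Teitelbaum computation in the general $W$-setting rather than citing it. Your approach buys transparency --- it exhibits exactly why multiplication by $\beta$ on $C^{\an}_W(\grT,K)$ is Fourier-dual to the invariant derivation $\beta$ on $\cO(\CharVar{\grT}_W/K)$, namely because the exponential intertwines the two --- at the cost of having to justify the convergence of $\sum_k \tfrac{t^k}{k!}\chi\cdot\beta^k$ in $C^{\an}_W(\grT,K)$ and the identification of $\beta^k.f$ with the $k$-th derivative along the flow (both of which you handle correctly: the former for $|t|$ small using compactness of $\grT$ and continuity of $\mu_f$, the latter from the description of the Lie algebra of $\CharVar{\grT}_W$ as $W_K$ in Theorem \ref{thm:AdicCharVariety}). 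The paper's route is shorter and reuses established machinery; yours avoids the external reference entirely. One cosmetic remark: for $\beta\in W_K$ not lying in $\Hom_{\Zp}(\grT,\Zp)$ the notation $\chi_{\beta\otimes e^t}$ is a mild abuse (as it already is in the paper's proof of the separation proposition); what you actually use, and correctly so, is the character $g\mapsto \exp(t\beta(g))$ for $|t|$ sufficiently small, whose differential at $0$ is $t\beta\in W_K$.
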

\begin{proof}
In the case $W=\Hom_\Zp(\grT,L)$, the Fourier transform $F_W$ is just the classical Amice transform $F$ and the statement is well-known, see e.g. \cite[Lemma 4.6 (8)]{ST}. For general $W\subseteq \Hom_{\Zp}(\grT,K)$, the statement follows from Theorem \ref{thm:pAdicFourierMainTheoremW} and the commutativity of
\[
		\xymatrix{
			I_W 		\ar[r]^{\mult_X^*}\ar[d]^{F} & I_W \ar[d]^{F} \\
		 	F(I_W) 	\ar[r]^{X.} & F(I_W) .
		}
\]
\end{proof}
For later reference, let us observe the following tautological integration formula:
\begin{lemma}
	Any finite character $\chi_{\text{fin}}\colon \grT\to K^\times$ is contained in $\CharVar{\grT}_W(K)$ and we have
	\[
		\mult_{\chi_{\text{fin}}}^*\mu_f=f((-)\cdot\chi_{\text{fin}}).
	\]
\end{lemma}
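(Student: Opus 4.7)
The plan is to first observe that the identity really is an equality in $D_W(\grT,K)$ after identifying the right-hand side with its $F_W$-preimage, and then to verify it tautologically using the Fourier isomorphism and the integration formula \eqref{eq:1-int-formula}.

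First, for the membership claim, a finite character $\chi_{\text{fin}}\colon \grT\to K^\times$ necessarily factors through $\grT/p^n\grT$ for some $n$, so it is locally constant and satisfies $d\chi_{\text{fin}}=0$. Hence by Example \ref{exmpl:DistrTriv} it lies in $\CharVar{\grT}_{\{0\}}(K)$, and since $\{0\}\subseteq W$ we have $\CharVar{\grT}_{\{0\}}(K)\subseteq \CharVar{\grT}_W(K)$, proving the first assertion. Next, I would check that multiplication by $\chi_{\text{fin}}$ preserves the differential condition: for $g\in C^{\an}_W(\grT,K)$, the Leibniz rule gives
\[
    d(\chi_{\text{fin}}\cdot g) = \chi_{\text{fin}}\cdot dg \in C^{\an}(\grT,K)\otimes_L W,
\]
so $\mult_{\chi_{\text{fin}}}$ is a continuous endomorphism of $C^{\an}_W(\grT,K)$, and its dual $\mult_{\chi_{\text{fin}}}^*$ acts on $D_W(\grT,K)$. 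In particular, applying this to $g=\chi$ shows that $\chi \cdot \chi_{\text{fin}}\in \CharVar{\grT}_W(K)$ whenever $\chi\in \CharVar{\grT}_W(K)$, so the function $\chi \mapsto f(\chi\cdot\chi_{\text{fin}})$ is a well-defined element of $\cO(\CharVar{\grT}_W/K)$.

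By Theorem \ref{thm:pAdicFourierMainTheoremW}, the Fourier transform $F_W\colon D_W(\grT,K)\xrightarrow{\sim}\cO(\CharVar{\grT}_W/K)$ is an isomorphism, so the claimed identity is equivalent to the equality of the two sides as functions on $\CharVar{\grT}_W(K)$. For an arbitrary $\chi\in \CharVar{\grT}_W(K)$ I would compute
\[
    F_W(\mult_{\chi_{\text{fin}}}^*\mu_f)(\chi) = (\mult_{\chi_{\text{fin}}}^*\mu_f)(\chi) = \mu_f(\chi_{\text{fin}}\cdot\chi) = f(\chi\cdot\chi_{\text{fin}}),
\]
where the middle equality is the definition of the dual of $\mult_{\chi_{\text{fin}}}$, and the final equality uses \eqref{eq:1-int-formula} together with the fact that $\chi_{\text{fin}}\cdot\chi\in \CharVar{\grT}_W(K)$ lies in the domain on which $F_W(\mu_f)=f$. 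This is exactly the value of $f((-)\cdot\chi_{\text{fin}})$ at $\chi$.

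There is no real obstacle here; the lemma is formally tautological once one has the two preliminary observations that (i) finite characters are locally constant, and (ii) multiplying by a function with vanishing differential preserves any differential condition. The only place where one has to be slightly careful is in reading the right-hand side as the unique distribution whose Fourier transform is the translate $f((-)\cdot\chi_{\text{fin}})\in \cO(\CharVar{\grT}_W/K)$, which makes sense because translation by $\chi_{\text{fin}}\in \CharVar{\grT}_W(K)$ is an automorphism of the rigid analytic group $\CharVar{\grT}_W\otimes_L K$.
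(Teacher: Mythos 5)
Your proof is correct and follows essentially the same route as the paper: the paper likewise observes that a finite character is locally constant and hence lies in $\CharVar{\grT}_W(K)$ for any $W$, and then verifies the identity by the same one-line computation $(\mult_{\chi_{\text{fin}}}^*\mu_f)(\varphi)=\mu_f(\varphi\cdot\chi_{\text{fin}})=f(\varphi\cdot\chi_{\text{fin}})$. Your additional remarks (that $\mult_{\chi_{\text{fin}}}$ preserves the differential condition by the Leibniz rule, and that the right-hand side is read via $F_W^{-1}$) are details the paper leaves implicit, and they are correct.
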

\begin{proof}
A finite character is locally constant and hence in $\CharVar{\grT}_W(K)$ for any differential condition $W$. The integration formula follows immediately from the definitions: For any $\varphi\in \CharVar{\grT}_W(K)$, we have:
\[
	(\mult_{\chi_{\text{fin}}}^*\mu_f)(\varphi)=\mu_f(\varphi\cdot \chi_{\text{fin}})=f(\varphi\cdot \chi_{\text{fin}}).
\]
\end{proof}


\section{$p$-divisible groups over $\sO_{\C_p}$} Here we review the classification of $p$-divisible groups over $\sO_{\C_p}$ by Scholze--Weinstein. Let us denote by $\pDiv_{\OCp}$ the category of $p$-divisible groups over $\OCp$. The other category appearing in the classification result of Scholze and Weinstein is the following category of pairs:
\begin{definition}
We write $\HTpairs$ for the following category:
\begin{itemize}
\item \textbf{Objects} given by pairs $(W,T)$ with $T$ a free $\Zp$-module of finite rank and $W\subseteq T\otimes_{\Zp}\Cp(-1)$.
\item \textbf{Morphisms}  $(W_1,T_1)\to (W_2,T_2)$  are $\Zp$-linear maps $\varphi \colon T_1\to T_2$ such that the induced map $T_1\otimes \Cp(-1)\to T_2\otimes \Cp(-1)$ maps $W_1$ into $W_2$.
\end{itemize}
\end{definition}
For a $p$-divisible group $G$ over $\OCp$, the injective map in the Hodge--Tate exact sequence for the dual $p$-divisible group $G^\vee$ gives an inclusion
\[
	\Lie G\otimes_{\OCp}\Cp \subseteq T_p(G)\otimes_{\Zp}\Cp(-1).
\] 
Hence, we obtain a functor 
\[
	HT\colon\pDiv_{\OCp}\to \HTpairs,\quad G\mapsto (\Lie G\otimes \Cp, T_p(G)).
\]
Scholze and Weinstein have shown that the functor $HT$ is an equivalence of categories (see \cite[Theorem 5.2.1]{ScholzeWeinstein}). By Cartier duality, we have for any $G\in \mathrm{ob}(\pDiv_{\OCp})$ a canonical isomorphism
\[
	\Hom_{\Zp}(T_p(G^\vee),\Cp)\cong T_p(G)\otimes_{\Zp}\Cp(-1).
\]
For our purposes, it is more convenient to express the classification of $p$-divisible groups in terms of the Tate module $T_p(G^\vee)$  instead of $T_p(G)$. Therefore, we define:
\begin{definition}
We write $\HTDpairs$ for the following category:
\begin{itemize}
\item \textbf{Objects} given by pairs $(W,T)$ with $T$ a free $\Zp$-module of finite rank and $W\subseteq \Hom_{\Zp}(T,\Cp)$.
\item \textbf{Morphisms}  $(W_1,T_1)\to (W_2,T_2)$  are $\Zp$-linear maps $\varphi \colon T_2\to T_1$ such that the induced map $\Hom_{\Zp}(T_1,\Cp)\to \Hom_{\Zp}(T_2,\Cp)$ maps $W_1$ into $W_2$.
\end{itemize}
\end{definition}
In terms of $T_p(G^\vee)$, the injective part of the Hodge--Tate sequence reads
\[
	\Lie G\otimes_{\OCp}\Cp \subseteq \Hom_{\Zp}(T_p(G^\vee),\Cp),
\] 
and we obtain a functor
\[
	HT^\vee\colon \pDiv_{\OCp}\to \HTDpairs, \quad G\mapsto (\Lie G\otimes_{\OCp}\Cp ,T_p(G^\vee)).
\]
Of course, there is an equivalence of categories
\[
	\HTpairs\xrightarrow{\sim} \HTDpairs,\quad (W,T)\mapsto (W,\Hom(T,\Zp(1))),
\]
fitting into the commutative diagram:
\begin{equation}\label{eq:commDiagHT}
	\xymatrix{
			\HTpairs\ar[rr]^{\sim} & & \HTDpairs\\
			& \pDiv. \ar[lu]^{HT}\ar[ru]_{HT^\vee} &
	}
\end{equation}
In terms of the functor $HT^\vee$, the classification result for $p$-divisible groups over $\OCp$ of
Scholze and Weinstein can be formulated as follows:

\begin{theorem}[Scholze--Weinstein]\label{thm:ScholzeWeinstein}
We have an equivalence of categories:
\begin{align*}
		HT^\vee \colon \pDiv_{\OCp}\xrightarrow{\sim} \HTDpairs, \quad G\mapsto (\Lie G\otimes \C_p, T_p(G^\vee)).
\end{align*}
\end{theorem}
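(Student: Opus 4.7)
The plan is to deduce this theorem directly from the original Scholze--Weinstein classification, already recalled in the excerpt, by passing through the equivalence $\HTpairs \xrightarrow{\sim} \HTDpairs$ induced by Cartier duality on the Tate-module side. In other words, $HT^\vee$ is nothing but the composition of $HT$ with an equivalence of target categories, so the theorem reduces to verifying that this second functor is an equivalence and that the diagram \eqref{eq:commDiagHT} commutes.

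First I would check that the functor $\Phi\colon \HTpairs \to \HTDpairs$ defined by $(W,T)\mapsto (W,\Hom_{\Zp}(T,\Zp(1)))$ on objects, and sending $\varphi\colon T_1\to T_2$ to its $\Zp$-linear dual $\Hom(\varphi,\Zp(1))\colon \Hom(T_2,\Zp(1))\to \Hom(T_1,\Zp(1))$ on morphisms, is an equivalence. This is straightforward: the functor $T\mapsto \Hom_{\Zp}(T,\Zp(1))$ is an anti-equivalence on finite free $\Zp$-modules (with inverse given by the same construction), and under the evaluation/canonical identification $\Hom_{\Zp}(\Hom_{\Zp}(T,\Zp(1)),\Cp)\cong T\otimes_{\Zp}\Cp(-1)$, the datum of a subspace $W\subseteq T\otimes_{\Zp}\Cp(-1)$ corresponds bijectively to a subspace $W\subseteq \Hom_{\Zp}(\Hom_{\Zp}(T,\Zp(1)),\Cp)$. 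The compatibility with morphisms is immediate from the functoriality of these identifications.

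Next I would verify the commutativity of diagram \eqref{eq:commDiagHT}, that is, $\Phi\circ HT\cong HT^\vee$ as functors $\pDiv_{\OCp}\to \HTDpairs$. On objects this comes down to Cartier duality, which supplies the canonical isomorphism $T_p(G^\vee)\cong \Hom_{\Zp}(T_p(G),\Zp(1))$, together with the fact that under the identification $\Hom_{\Zp}(T_p(G^\vee),\Cp)\cong T_p(G)\otimes_{\Zp}\Cp(-1)$ the two forms of the Hodge--Tate inclusion agree. On morphisms, both $HT^\vee$ and $\Phi\circ HT$ send $\psi\colon G_1\to G_2$ to the dual map on Tate modules of the dual isogeny, which is visibly the same data.

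With these two ingredients in place, the theorem follows: since $HT$ is an equivalence by \cite[Theorem 5.2.1]{ScholzeWeinstein} and $\Phi$ is an equivalence by the first step, the composition $HT^\vee=\Phi\circ HT$ is also an equivalence. The only potential subtlety, and hence the main point requiring care, is the bookkeeping of the Tate twist $\Zp(1)$ in Cartier duality — one needs to confirm that under the identification $T_p(G^\vee)\otimes_{\Zp}\Cp\cong \Hom_{\Zp}(T_p(G),\Cp(1))$ the Hodge--Tate filtration $\Lie G\otimes_{\OCp}\Cp\hookrightarrow T_p(G)\otimes_{\Zp}\Cp(-1)$ of the original formulation transforms into the inclusion $\Lie G\otimes_{\OCp}\Cp\hookrightarrow \Hom_{\Zp}(T_p(G^\vee),\Cp)$, but this is a standard consequence of the compatibility of the Hodge--Tate decomposition with Cartier duality.
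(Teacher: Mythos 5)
Your proposal is correct and follows exactly the paper's route: the paper's proof is the one-line observation that the statement follows from \cite[Theorem 5.2.1]{ScholzeWeinstein} together with the commutative diagram \eqref{eq:commDiagHT}, i.e.\ $HT^\vee$ is $HT$ composed with the Cartier-duality equivalence $\HTpairs\xrightarrow{\sim}\HTDpairs$. You simply spell out in more detail the two checks (that this comparison functor is an equivalence and that the diagram commutes) that the paper leaves implicit.
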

\begin{proof}
This follows from \cite[Theorem 5.2.1]{ScholzeWeinstein} using \eqref{eq:commDiagHT}.
\end{proof}
We refer to \cite[Theorem 5.2.1]{ScholzeWeinstein} for a full proof, but let us briefly recall the following explicit description of a quasi-inverse
\[
	\HTDpairs\xrightarrow{\sim} \pDiv_{\OCp}, \quad (W,T)\mapsto G_{(W,T)}.
\]
Given $(W,T)\in \mathrm{ob}(\HTDpairs)$, the logarithm of the formal multiplicative group $\Gmf$ induces a map
\begin{equation}\label{eq:WtoLieG}
	\log\colon \Hom_{\Zp}(T,\Zp)\otimes_\Zp (\Gmf)^{\ad}_\eta \to \Hom_{\Zp}(T,\Cp)\otimes_{\Cp} \bbA^1,
\end{equation}
where $(\Gmf)^{\ad}_\eta$ is the (adic) generic fiber of the formal multiplicative group and $\bbA^1$ denotes the adic affine line over $\Cp$.
The following pullback diagram defines a $p$-divisible rigid-analytic group in the sense of \cite{Fargues19},
\begin{equation}\label{eq:ScholzeWeinsteinCartDiag}
	\xymatrix{
		(G_{(W,T)})_\eta^{\ad} \ar[r]\ar[d] &  \Hom_{\Zp}(T,\Zp)\otimes_\Zp (\Gmf)^{\ad}_\eta \ar[d]^-{\log}\\
		W\otimes_{\Cp} \mathbb{A}^1 \ar[r] & \Hom_{\Zp}(T,\Cp)\otimes_{\Cp} \bbA^1,
	}
\end{equation}
where the lower horizontal map is given by the inclusion $W\subseteq \Hom_{\Zp}(T,\Cp)$. In the proof of \cite[Theorem 5.2.1]{ScholzeWeinstein} it is finally shown that $(G_{(W,T)})_\eta^{\ad}$ is indeed the adic generic fiber of a $p$-divisible group $G_{(W,T)}$, which can be explicitly described as
\[
	G_{(W,T)}:=\coprod_{Y\subseteq (G_{(W,T)})_\eta^{\ad}} \Spf H^0(Y,\cO_Y^+).
\]

\section{Uniformization of character varieties with differential conditions by $p$-divisible groups}
For any complete subfield $K\subseteq \Cp$, there is a canonical fully faithful functor from the category $\Rigid_K$ of $K$-rigid analytic varieties to the category of adic spaces
\[
	\Rigid_K\to \Adic, \quad X\mapsto X^{\ad}.
\]
For a $p$-divisible group $G$ over valuation ring $\sO_K$ of $K$, let us write $G^{\rig}_\eta\in \Rigid_K$ for the rigid analytic generic fiber of $G$. Note that the rigid analytic generic fiber functor is compatible with the adic generic fiber functor, i.e. we have an equivalence $(\cdot)^{\ad}\circ (\cdot)_\eta^{\rig}\simeq (\cdot)^{\ad}_\eta \colon \pDiv\to \Adic$.

\begin{theorem}\label{thm:UniformizationOfCharVar}
Let $L$ be a complete subfield of $\Cp$. 
Let $\grT$ be a free $\Zp$-module of finite rank and $W\subseteq \Hom_{\Zp}(\grT,L)$ a $L$-subvector space. We obtain a pair $(W_{\Cp},T)\in \HTDpairs$ with $W_{\Cp}:=W\otimes_L \Cp$. There is an isomorphism of rigid analytic varieties over $\Cp$
\[
	(G_{(W_{\Cp},T)})_\eta^{\rig} \cong \CharVar{\grT}_W/\Cp,
\]
where $ \CharVar{\grT}_W$ is the character variety of $T$ with differential condition $W$, and $(G_{(W_{\Cp},T)})_\eta^{\rig}$ is the rigid generic fiber of the $p$-divisible group $G_{(W_{\Cp},T)}$ associated to the pair $(W_{\Cp},T)$, see \eqref{eq:ScholzeWeinsteinCartDiag}.
\end{theorem}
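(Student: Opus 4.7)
The plan is to exhibit the isomorphism by directly comparing the two cartesian diagrams that define each side, after base change to $\Cp$. On the one hand, Theorem \ref{thm:AdicCharVariety} realizes $\CharVar{\grT}_W$ as the fiber product \eqref{eq:FiberProd}; on the other hand, the construction of Scholze--Weinstein recalled in \eqref{eq:ScholzeWeinsteinCartDiag} realizes $(G_{(W_{\Cp},T)})_\eta^{\ad}$ as the analogous fiber product, with $W$ replaced by $W_{\Cp}=W\otimes_L\Cp$ and with the rigid generic fiber of $\Gmf$ replaced by its adic generic fiber. Since base change from $L$ to $\Cp$ turns $W\otimes_L\bbA^1$ into $W_{\Cp}\otimes_{\Cp}\bbA^1$, and since the fully faithful functor $X\mapsto X^{\ad}$ identifies $(\Gmf)^{\rig}_\eta$ base-changed to $\Cp$ with $(\Gmf)^{\ad}_\eta$ over $\Cp$, the two defining pullback squares become literally the same diagram of adic spaces over $\Cp$.

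First I would write down both cartesian diagrams explicitly side by side, being careful to note that the vertical maps in both are induced by the logarithm on $(\Gmf)_\eta$, tensored with the identity on $\Hom_{\Zp}(T,\Zp)$, and that the lower horizontal maps are both induced by the inclusion $W\hookrightarrow\Hom_{\Zp}(T,\Cp)$ after base change. Next I would base change the diagram of Theorem \ref{thm:AdicCharVariety} from $L$ to $\Cp$, using that fiber products commute with base change, to obtain a cartesian square over $\Cp$ whose terms match those of \eqref{eq:ScholzeWeinsteinCartDiag} term by term. Passing to adic spaces via the fully faithful embedding $\Rigid_{\Cp}\hookrightarrow \Adic$, and using the compatibility $(\cdot)^{\ad}\circ(\cdot)_\eta^{\rig}\simeq (\cdot)^{\ad}_\eta$ noted just before the theorem, the universal property of the fiber product then produces a canonical isomorphism
\[
(\CharVar{\grT}_W/\Cp)^{\ad}\;\cong\;(G_{(W_{\Cp},T)})_\eta^{\ad}.
\]

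Finally, I would descend this back from the adic to the rigid world: the right-hand side $(G_{(W_{\Cp},T)})_\eta^{\ad}$ is the adification of $(G_{(W_{\Cp},T)})_\eta^{\rig}$ by \cite{ScholzeWeinstein}, and the left-hand side is the adification of $\CharVar{\grT}_W/\Cp$, so the full faithfulness of $(\cdot)^{\ad}$ yields the desired isomorphism of rigid analytic varieties over $\Cp$. I do not anticipate any genuine obstacle here; the only point requiring a little care is the bookkeeping between adic and rigid generic fibers and the identification, under the natural isomorphism $\Hom_{\Zp}(T,L)\otimes_L\bbA^1=\Hom_{\Zp}(T,\Zp)\otimes_{\Zp}\bbA^1$, of the two lower horizontal arrows, so that the two cartesian squares really are the same after base change to $\Cp$.
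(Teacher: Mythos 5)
Your proposal is correct and follows essentially the same route as the paper: both identify the defining cartesian square \eqref{eq:FiberProd} of $\CharVar{\grT}_W$, base changed to $\Cp$, with the Scholze--Weinstein pullback square \eqref{eq:ScholzeWeinsteinCartDiag} for $(G_{(W_{\Cp},T)})_\eta^{\ad}$, and conclude via the compatibility of $(\cdot)^{\ad}$ with fiber products and the full faithfulness of the adification functor. The extra bookkeeping you flag (matching the lower horizontal arrows under $\Hom_{\Zp}(\grT,L)\otimes_L\bbA^1=\Hom_{\Zp}(\grT,\Zp)\otimes_{\Zp}\bbA^1$) is exactly the content the paper leaves implicit.
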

\begin{proof}
By \eqref{eq:ScholzeWeinsteinCartDiag}, the adic space $(G_{(W_{\Cp},T)})_\eta^{\ad}$ is given by the Cartesian diagram
\begin{equation*}
	\xymatrix{
		(G_{(W_{\Cp},T)})_\eta^{\ad} \ar[r]\ar[d] &  \Hom_{\Zp}(\grT,\Zp)\otimes_\Zp (\Gmf)_\eta^{\ad} \ar[d]^-{\log}\\
		W_{\Cp}\otimes_{\Cp} \mathbb{A}^1 \ar[r] & \Hom_{\Zp}(\grT,\Zp)\otimes_{\Zp} \mathbb{A}^1.
	}
\end{equation*}
By equation \eqref{eq:FiberProd} in Theorem \ref{thm:AdicCharVariety}, the adic space associated to the character variety $\CharVar{\grT}_W/\Cp$ is given by the same pull-back diagram. The claim follows since the functor $(\cdot)^{\ad}$  commutes with fiber products.
\end{proof}

Recall that we defined $\CatCharVar_{\Cp}$ as the essential image of the functor
\[
	\CharVar{(\cdot)}_{(\cdot)}\colon \HTDpairs\to \RigidGr_{\Cp}, \quad (W,T)\mapsto \CharVar{T}_W,
\]
to rigid analytic group varieties $\RigidGr_\Cp$ over $\Cp$.
\begin{corollary}
	The category $\CatCharVar_{\Cp}$ is the essential image of $\pDiv_{\OCp}$ under the generic fiber functor  and we have $\CharVar{(\cdot)}_{(\cdot)}\simeq (\cdot)^{\rig}_\eta\circ HT^\vee $.
\end{corollary}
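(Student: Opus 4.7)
The plan is to assemble two previously established ingredients: the explicit uniformization from Theorem~\ref{thm:UniformizationOfCharVar} and the Scholze--Weinstein classification in Theorem~\ref{thm:ScholzeWeinstein}.

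First, I would upgrade Theorem~\ref{thm:UniformizationOfCharVar} from a pointwise statement into a natural isomorphism of functors $\HTDpairs \to \RigidGr_{\Cp}$. Comparing the Cartesian square \eqref{eq:FiberProd} defining $\CharVar{T}_W$ with the square \eqref{eq:ScholzeWeinsteinCartDiag} defining $(G_{(W,T)})_\eta^{\ad}$, the two diagrams coincide after passing to adic generic fibers. For any morphism $(W_1,T_1) \to (W_2,T_2)$ in $\HTDpairs$, the induced maps on the corners of these two pullback squares agree, so by the universal property of fibre products the isomorphism $\CharVar{T}_W \cong (G_{(W,T)})_\eta^{\rig}$ is natural in $(W,T)$.

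Next I would combine this naturality with the Scholze--Weinstein equivalence. Writing $G_{(\cdot,\cdot)}\colon \HTDpairs \xrightarrow{\sim} \pDiv_{\OCp}$ for the quasi-inverse of $HT^\vee$ produced in Theorem~\ref{thm:ScholzeWeinstein}, the natural isomorphism just obtained reads
\[
\CharVar{(\cdot)}_{(\cdot)} \simeq (\cdot)_\eta^{\rig} \circ G_{(\cdot,\cdot)}\colon \HTDpairs \to \RigidGr_{\Cp},
\]
which is the asserted identity under the implicit identification of $HT^\vee$ with its quasi-inverse. Equivalently, pre-composing with $HT^\vee$ yields the concrete formula $\CharVar{T_p(G^\vee)}_{\Lie G \otimes \Cp} \simeq G_\eta^{\rig}$ for every $G \in \pDiv_{\OCp}$.

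Finally, the description of $\CatCharVar_{\Cp}$ as the essential image of the generic fibre functor follows formally: by definition $\CatCharVar_{\Cp}$ is the essential image of $\CharVar{(\cdot)}_{(\cdot)}$, and since $HT^\vee$ is an equivalence, the essential images of the two naturally isomorphic functors $\CharVar{(\cdot)}_{(\cdot)}$ and $(\cdot)_\eta^{\rig} \circ G_{(\cdot,\cdot)}$ agree with the essential image of $(\cdot)_\eta^{\rig}\colon \pDiv_{\OCp} \to \RigidGr_{\Cp}$. I do not foresee any genuine obstacle: the only nontrivial point is the functoriality check in the first step, and that is immediate from the universal property of fibre products once one observes that the defining squares on either side are literally the same diagram.
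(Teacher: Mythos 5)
Your proposal is correct and follows the same route the paper intends: the corollary is stated without proof precisely because it is the formal combination of Theorem~\ref{thm:UniformizationOfCharVar} (whose proof already identifies the two defining pullback squares, making naturality in $(W,T)$ immediate) with the Scholze--Weinstein equivalence of Theorem~\ref{thm:ScholzeWeinstein}. Your observation that the displayed identity $\CharVar{(\cdot)}_{(\cdot)}\simeq(\cdot)^{\rig}_\eta\circ HT^\vee$ only typechecks after replacing $HT^\vee$ by its quasi-inverse is a fair and correctly resolved reading of the paper's slight abuse of notation.
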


Let us discuss some examples of $p$-divisible groups over $\OCp$ and their associated character varieties. We start with the following trivial examples:

\begin{corollary}
Let $T$ be a finite free $\Zp$-module. The pair $(0,T)\in \HTDpairs$ corresponds to the \'etale $p$-divisible group $\Hom_{\Zp}(T,\Zp(1))\otimes \underline{\Qp /\Zp}$ and we obtain the following isomorphism of rigid analytic varieties over $\Cp$
\[
	\CharVar{\grT}^{\text{lc}}\cong \Hom_{\Zp}(T,\Zp(1))\otimes (\underline{\Qp /\Zp})_{\eta}^{\rig}.
\]
\end{corollary}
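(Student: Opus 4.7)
The approach is to combine Theorem~\ref{thm:UniformizationOfCharVar} for the special case $W=0$ with an explicit computation, via the defining Cartesian square~\eqref{eq:ScholzeWeinsteinCartDiag}, of the $p$-divisible group attached to the pair $(0,T)\in\HTDpairs$.

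First, by Example~\ref{exmpl:DistrTriv} we have $\CharVar{T}^{\text{lc}}=\CharVar{T}_{\{0\}}$, so Theorem~\ref{thm:UniformizationOfCharVar} applied with $W=0$ yields a natural isomorphism
\[
(G_{(0,T)})_\eta^{\rig}\cong \CharVar{T}^{\text{lc}}/\Cp,
\]
and it remains only to identify $G_{(0,T)}$ explicitly. Setting $W_{\Cp}=0$ in \eqref{eq:ScholzeWeinsteinCartDiag} makes the bottom-left corner trivial, so the Cartesian square exhibits $(G_{(0,T)})_\eta^{\ad}$ as the kernel of the logarithm
\[
\log\colon \Hom_{\Zp}(T,\Zp)\otimes_{\Zp}(\Gmf)_\eta^{\ad}\longrightarrow \Hom_{\Zp}(T,\Zp)\otimes_{\Zp}\bbA^1.
\]
Since $\ker(\log)$ on $(\Gmf)_\eta^{\ad}$ is $\mu_{p^\infty}$, and since $T$ is finite free (so the tensor product commutes with kernels), this produces
\[
(G_{(0,T)})_\eta^{\ad}\cong \Hom_{\Zp}(T,\Zp)\otimes_{\Zp}\mu_{p^\infty}\cong \Hom_{\Zp}(T,\Zp(1))\otimes_{\Zp}\underline{\Qp/\Zp},
\]
using the standard identification $\mu_{p^\infty}\cong \Zp(1)\otimes_{\Zp}\underline{\Qp/\Zp}$.

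The right-hand side is manifestly the adic generic fiber of the étale $p$-divisible group $\Hom_{\Zp}(T,\Zp(1))\otimes\underline{\Qp/\Zp}$ over $\OCp$, and passing back to rigid generic fibers (compatible with adic ones, as in Theorem~\ref{thm:UniformizationOfCharVar}) yields the claimed formula. The only subtlety I anticipate is bookkeeping with the Tate twist and verifying that the abstract kernel above really is the generic fiber of a $p$-divisible group over $\OCp$; both are automatic from the Scholze--Weinstein construction, in which \eqref{eq:ScholzeWeinsteinCartDiag} is by design the adic generic fiber of an actual $p$-divisible group for every $(W,T)\in\HTDpairs$.
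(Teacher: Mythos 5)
Your argument is correct, but it runs the Scholze--Weinstein equivalence in the opposite direction from the paper. The paper's entire proof is the one-line verification that $HT^\vee\bigl(\Hom_{\Zp}(T,\Zp(1))\otimes \underline{\Qp/\Zp}\bigr)=(0,T)$: for an \'etale $p$-divisible group one has $\Lie G=0$, and $T_p(G^\vee)=\Hom_{\Zp}(T_p(G),\Zp(1))=\Hom_{\Zp}(\Hom_{\Zp}(T,\Zp(1)),\Zp(1))=T$, after which Theorem~\ref{thm:UniformizationOfCharVar} and Example~\ref{exmpl:DistrTriv} give the displayed isomorphism. You instead compute the quasi-inverse applied to $(0,T)$ by specializing the Cartesian square~\eqref{eq:ScholzeWeinsteinCartDiag} to $W=0$, identifying $(G_{(0,T)})_\eta^{\ad}$ with the fiber of $\log$ over the zero section, i.e.\ with $\Hom_{\Zp}(T,\Zp)\otimes(\mu_{p^\infty})_\eta^{\ad}$ (note the kernel should be written as the generic fiber $(\mu_{p^\infty})_\eta^{\ad}$, and one uses that $\log$ is \'etale in characteristic zero so this fiber is reduced), and then untwisting via $\mu_{p^\infty}\cong \Zp(1)\otimes\underline{\Qp/\Zp}$ over $\OCp$. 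Both routes are legitimate instantiations of the same equivalence; yours has the merit of not requiring any knowledge of the Hodge--Tate sequence of an \'etale group (everything is read off from the explicit pullback construction), while the paper's is shorter because checking the forward functor $HT^\vee$ on the known candidate avoids any geometry of the logarithm. Your closing remark that the Scholze--Weinstein construction guarantees the kernel is the generic fiber of an honest $p$-divisible group over $\OCp$ is exactly the right point to flag and is indeed automatic.
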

\begin{proof}
We have $HT^\vee(\Hom_{\Zp}(T,\Zp(1))\otimes \underline{\Qp /\Zp})=(0,T)$.
\end{proof}

\begin{corollary}
Let $T$ be a finite free $\Zp$-module. The pair $(\Hom_{\Zp}(T,\Cp),T)\in \HTDpairs$ corresponds to the $p$-divisible group $\Hom_{\Zp}(T,\Zp)\otimes \mu_{p^\infty}$ and we obtain the following isomorphism of rigid analytic varieties over $\Cp$
\[
	\CharVar{\grT}\cong \Hom_{\Zp}(T,\Zp)\otimes (\Gmf)_{\eta}^{\rig}.
\]
\end{corollary}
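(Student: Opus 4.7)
The plan is to apply Theorem \ref{thm:UniformizationOfCharVar} in the case $L=\Cp$ and $W=\Hom_{\Zp}(T,\Cp)$, so that $W_{\Cp}=W$. The task then reduces to identifying the $p$-divisible group $G_{(W,T)}$ over $\OCp$ assigned by the Scholze--Weinstein quasi-inverse to the pair $(\Hom_{\Zp}(T,\Cp),T)\in\HTDpairs$, and computing its rigid generic fiber.

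The candidate is $G:=\Hom_{\Zp}(T,\Zp)\otimes_{\Zp}\mu_{p^\infty}$. I verify that $HT^\vee(G)=(\Hom_{\Zp}(T,\Cp),T)$. For the Tate module of the dual: Cartier duality gives $(\mu_{p^\infty})^\vee=\underline{\Qp/\Zp}$, and since Cartier duality is compatible with tensoring by a finite free $\Zp$-module, $G^\vee\cong T\otimes_{\Zp}\underline{\Qp/\Zp}$, so $T_p(G^\vee)=T$. For the Lie algebra: $\Lie(\mu_{p^\infty})$ is free of rank one over $\OCp$, hence $\Lie(G)\otimes_{\OCp}\Cp=\Hom_{\Zp}(T,\Zp)\otimes_{\Zp}\Cp=\Hom_{\Zp}(T,\Cp)=W$. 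By the fully faithful part of Theorem \ref{thm:ScholzeWeinstein}, $G_{(W,T)}\cong G$.

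Since the rigid analytic generic fiber functor commutes with tensoring by finite free $\Zp$-modules and sends $\mu_{p^\infty}$ to $(\Gmf)_\eta^{\rig}$, one obtains $G_\eta^{\rig}=\Hom_{\Zp}(T,\Zp)\otimes_{\Zp}(\Gmf)_\eta^{\rig}$, and Theorem \ref{thm:UniformizationOfCharVar} yields the claimed isomorphism. As an independent sanity check, when $W=\Hom_{\Zp}(T,\Cp)$ the lower horizontal map of the Cartesian diagram \eqref{eq:ScholzeWeinsteinCartDiag} is an isomorphism, so the diagram collapses and one reads off $G_\eta^{\ad}\cong\Hom_{\Zp}(T,\Zp)\otimes_{\Zp}(\Gmf)_\eta^{\ad}$ directly, without invoking the full Scholze--Weinstein classification. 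No substantial obstacle arises; the whole argument is bookkeeping once Theorem \ref{thm:UniformizationOfCharVar} is in hand, the only point requiring care being the Hodge--Tate pair computation, which ultimately reduces to the identity $HT^\vee(\mu_{p^\infty})=(\Cp,\Zp)$.
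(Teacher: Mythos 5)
Your proof is correct and follows essentially the same route as the paper, whose entire argument is the one-line assertion $HT^\vee(\Hom_{\Zp}(T,\Zp)\otimes \mu_{p^\infty})=(\Hom_{\Zp}(T,\Cp),T)$ combined with Theorem \ref{thm:UniformizationOfCharVar}; you simply spell out the Cartier-duality and Lie-algebra bookkeeping behind that identity. The added observation that the Cartesian diagram \eqref{eq:ScholzeWeinsteinCartDiag} collapses when $W=\Hom_{\Zp}(T,\Cp)$ is a nice, correct shortcut but does not change the substance.
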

\begin{proof}
We have $HT^\vee(\Hom_{\Zp}(T,\Zp)\otimes \mu_{p^\infty})=(\Hom_{\Zp}(T,\Cp),T)$.
\end{proof}

As a less trivial example, let us remark that we get the main result of Schneider--Teitelbaum about the uniformization of locally $L$-analytic character varieties from Theorem \ref{thm:UniformizationOfCharVar}:

\begin{corollary}[{\cite[Theorem 3.6]{ST}}]
Let $L$ be a finite extension over $\Qp$. Let $G$ be the $p$-divisible group over $\OCp$ associated to a Lubin--Tate formal group with endomorphisms by $\sO_L$, then $HT^\vee(G)\cong  (\Hom_{L}(L,L),\sO_L)\in \HTDpairs$. In particular, we have
\[
	\CharVar{T}^{\Lan}\cong G^{\rig}_\eta
\] 
\end{corollary}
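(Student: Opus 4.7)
The plan is to apply Theorem \ref{thm:UniformizationOfCharVar} with $\grT = \sO_L$ and $W = \Hom_L(L,L) \subseteq \Hom_{\Qp}(L,L) = \Hom_{\Zp}(\sO_L, L)$. By Example \ref{exmpl:DistrL}, this choice of $W$ recovers $\CharVar{(\sO_L)}_W = \CharVar{(\sO_L)}^{\Lan}$, so the entire statement reduces to exhibiting an isomorphism
\[
HT^\vee(G) \;\cong\; (\Hom_L(L,L) \otimes_L \Cp,\ \sO_L)
\]
in the category $\HTDpairs$. The uniformization $\CharVar{(\sO_L)}^{\Lan} \cong G^{\rig}_\eta$ then follows from Theorem \ref{thm:UniformizationOfCharVar}.

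My first step is to pin down $T_p(G^\vee)$ as an $\sO_L$-module. Since $G$ has height $[L:\Qp]$ and carries an $\sO_L$-action, the same holds for the Cartier dual $G^\vee$. The Tate module $T_p(G^\vee)$ is therefore a torsion-free $\sO_L$-module of $\Zp$-rank $[L:\Qp]$, and hence free of rank one over $\sO_L$. Fixing an identification $T_p(G^\vee) \cong \sO_L$ of $\sO_L$-modules, I obtain
\[
\Hom_{\Zp}(T_p(G^\vee), \Cp) \;\cong\; \Hom_{\Qp}(L, \Cp) \;\cong\; \prod_{\sigma \colon L \hookrightarrow \Cp} \Cp,
\]
where the last identification is the isotypical decomposition of $L \otimes_{\Qp} \Cp$ under the $\sO_L$-action.

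Next I would analyse the Hodge--Tate inclusion $\Lie G \otimes_{\OCp} \Cp \hookrightarrow \Hom_{\Zp}(T_p(G^\vee), \Cp)$. This map is $\sO_L$-linear by functoriality, and the defining property of a Lubin--Tate formal group is that $\sO_L$ acts on $\Lie G$ through the canonical inclusion $\sigma_{\can} \colon L \hookrightarrow \OCp \subseteq \Cp$. The image of the Hodge--Tate map therefore lies in the $\sigma_{\can}$-isotypical summand of $\prod_\sigma \Cp$, and equals that summand by a dimension count (both sides are one-dimensional over $\Cp$). Under the identification above this summand is exactly $\Hom_L(L,\Cp) = \Hom_L(L,L) \otimes_L \Cp$, giving the required isomorphism of pairs.

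The main obstacle I anticipate is the careful tracking of $\sO_L$-equivariance throughout: that the Hodge--Tate injection is $\sO_L$-linear on both sides, and that the $\sO_L$-action on $\Lie G$ truly coincides with the canonical embedding $\sigma_{\can}$ rather than a Galois conjugate. Both assertions are essentially formal from the naturality of the Hodge--Tate sequence and from the construction of Lubin--Tate groups, but they are the geometric content of the corollary; once they are in hand, the identification of the correct isotypical component is a routine dimension argument and the conclusion is immediate from Theorem \ref{thm:UniformizationOfCharVar}.
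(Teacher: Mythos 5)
Your proposal is correct and follows essentially the same route as the paper: identify $T_p(G^\vee)$ as a free $\sO_L$-module of rank one, and identify the image of the Hodge--Tate injection with the summand of $\Hom_{\Zp}(T_p(G^\vee),\Cp)$ on which $\sO_L$ acts through the fixed embedding, which is $\Hom_L(L,L)\otimes_L\Cp$. You merely spell out in more detail (via $\sO_L$-equivariance, the Lubin--Tate condition on $\Lie G$, and a dimension count) the steps the paper asserts in one line.
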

\begin{proof}
It suffices to show that $HT^\vee(G)\cong (\Hom_{L}(L,L),\sO_L)\in \HTDpairs$. Note that the image of the injective map in the Hodge-Tate sequence
\[
	\Lie G\otimes_{\OCp} \Cp \hookrightarrow T_pG(-1)\otimes_\Zp \Cp\cong \Hom_{\Zp}(T_p(G^\vee), \Cp).
\]
 is exactly the sub-space of $\Hom_{\Zp}(T_pG^\vee, \Cp)$, where the endomorphisms $\sO_L$ of $F_{LT}$ act by the fixed inclusion $\sO_L\subseteq \Cp$. The Tate module $T_p(G^\vee)$ is a free $\sO_L$-module of rank $1$. Hence, after fixing a generator of $T_p(G^\vee)$ as $\sO_L$-module, we obtain the desired isomorphism in $\HTDpairs$:
 \[
 	HT^\vee(G)\cong (\Lie G\otimes_{\OCp} \Cp, T_p(G^\vee))  \cong (\Hom_{L}(L,L),\sO_L).
 \]
\end{proof}

As a last example, we would like to discuss how $p$-divisible groups with CM by a finite extension $L$ of $\Qp$ fit into the picture.
\begin{definition}
Let $L$ be a finite extension of $\Qp$, $K$ a complete field extension of $L$ and $\Sigma\subseteq \Hom_{\Qp\text{-alg}}(L,K)$ be a subset of field embeddings of $L$ to $K$. A $p$-divisible group \emph{with CM by $\sO_L$ of type $\Sigma$} over the  valuation ring $\sO_K$ of $K$ is a pair $(G,i)$ consisting of a $p$-divisible group $G$ over $\sO_K$ and an embedding
\[
	i\colon \sO_L\hookrightarrow \End(G)
\] 
with $[L:\Qp]=\mathrm{ht}(G)$ such that  we have a decomposition 
\[
	\Lie G\otimes K\cong \bigoplus_{\sigma \in \Sigma} (\Lie G\otimes K)(\sigma)
\]
into $1$-dimensional $K$-vector spaces $(\Lie G\otimes K)(\sigma)$ such that $\sO_L$ acts on $(\Lie G\otimes K)(\sigma)$ by the embedding $\sigma \colon L\to K$.
\end{definition}

\begin{corollary}
	Let $L$ be a finite extension of $\Qp$ and $\Sigma\subseteq \Hom_{\Qp\text{-alg}}(L,\Cp)$. Let $G\in \mathrm{ob}(\pDiv_{\OCp})$ be a $p$-divisible group with CM by $\sO_L$ of type $\Sigma$. Then $HT^\vee(G)=(W(\Sigma), \sO_L)$, where
	\[
		W(\Sigma)\subseteq \Hom_{\Zp}(\sO_L,\Cp)
	\]
	 is the subspace defined in Example \ref{exmpl:DistrWSigma}. In particular, we have 
	 \[
	 	D^{\Sigma\text{-an}}(\sO_L,\Cp)\cong \cO(G^{\rig}_\eta/\Cp),
	 \]
	 where $D^{\Sigma\text{-an}}(\sO_L,\Cp):=C^{\Sigma\text{-an}}(\sO_L,\Cp)'$ is the Fr\'echet space given by the dual of the space of locally $\Sigma$-analytic functions defined in Definition \ref{def:SigmaAnalytic}. 
\end{corollary}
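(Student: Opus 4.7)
The plan is to reduce the Corollary to the identification of the Hodge--Tate pair $HT^{\vee}(G) = (W(\Sigma), \sO_L)$; once this is known, the displayed isomorphism $D^{\Sigma\text{-an}}(\sO_L,\Cp) \cong \cO(G^{\rig}_\eta/\Cp)$ falls out immediately by combining Theorem \ref{thm:pAdicFourierMainTheoremW} (applied to $\grT = \sO_L$ and $W = W(\Sigma)$, which by Example \ref{exmpl:DistrWSigma} yields $D_{W(\Sigma)}(\sO_L,\Cp) = D^{\Sigma\text{-an}}(\sO_L,\Cp)$) with Theorem \ref{thm:UniformizationOfCharVar}, which identifies $\CharVar{(\sO_L)}_{W(\Sigma)}/\Cp$ with $G^{\rig}_\eta$.

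For the main computation, first I would show that $T_p(G^\vee)$ is a free $\sO_L$-module of rank one. The CM structure $i\colon \sO_L \hookrightarrow \End(G)$ transports via Cartier duality to an $\sO_L$-action on $G^\vee$, and the Tate module $T_p(G^\vee)$ inherits an $\sO_L$-module structure. Since $\mathrm{ht}(G) = [L:\Qp]$, the module $T_p(G^\vee)$ is a finitely generated $\sO_L$-module of $\Zp$-rank $[L:\Qp]$; because it is also $\Zp$-torsion free, it must be free of rank one. Fixing a generator identifies $T_p(G^\vee) \cong \sO_L$ as $\sO_L$-modules, and hence yields
\[
\Hom_{\Zp}(T_p(G^\vee),\Cp) \;\cong\; \Hom_{\Zp}(\sO_L,\Cp) \;=\; \prod_{\sigma \in \Hom_{\Qp\text{-alg}}(L,\Cp)} \Hom_{\Cp}(\Cp,\Cp),
\]
using the standard decomposition $L\otimes_{\Qp}\Cp \cong \prod_{\sigma}\Cp$ recalled in Example \ref{exmpl:DistrWSigma}.

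The key step is then to show that under this identification, the image of the Hodge--Tate map $\Lie G \otimes_{\OCp}\Cp \hookrightarrow \Hom_{\Zp}(T_p(G^\vee),\Cp)$ is precisely the subspace $W(\Sigma) = \prod_{\sigma\in\Sigma}\Hom_{\Cp}(\Cp,\Cp)$. This uses the functoriality of the Hodge--Tate exact sequence: the map is $\sO_L$-equivariant, so it is compatible with the decompositions of source and target into eigenspaces for the $\sO_L$-action. By the CM hypothesis, $\sO_L$ acts on $\Lie G \otimes \Cp$ through the characters $\sigma \in \Sigma$ (each occurring with multiplicity one), while $\sO_L$ acts on the $\sigma$-factor of $\prod_\sigma \Hom_{\Cp}(\Cp,\Cp)$ via the embedding $\sigma$. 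Hence the injection of $\Lie G\otimes\Cp$ must land exactly in the $\Sigma$-indexed product of eigenspaces, which is $W(\Sigma)$; since both sides have $\Cp$-dimension $|\Sigma|$, the inclusion is an equality. This gives $HT^\vee(G) \cong (W(\Sigma),\sO_L)$ in $\HTDpairs$.

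I expect the main (minor) obstacle to be the bookkeeping for the $\sO_L$-equivariance of the Hodge--Tate map and the verification that the $\sigma$-eigenspace in $\Hom_{\Zp}(\sO_L,\Cp)$ really corresponds to the $\sigma$-factor in the product decomposition; both are standard but worth stating carefully. After that, the statement $D^{\Sigma\text{-an}}(\sO_L,\Cp) \cong \cO(G^{\rig}_\eta/\Cp)$ is automatic from Theorem \ref{thm:UniformizationOfCharVar} and Theorem \ref{thm:pAdicFourierMainTheoremW}.
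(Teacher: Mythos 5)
Your proposal is correct and follows essentially the same route as the paper's own (much terser) proof: identify $T_p(G^\vee)$ as a free rank-one $\sO_L$-module, use the $\Sigma$-eigenspace decomposition of $\Lie G\otimes\Cp$ to pin down the Hodge--Tate image as $W(\Sigma)$, and then invoke Theorem \ref{thm:UniformizationOfCharVar} together with Theorem \ref{thm:pAdicFourierMainTheoremW} and Example \ref{exmpl:DistrWSigma}. The details you supply (the height argument for freeness and the $\sO_L$-equivariance plus dimension count for the Hodge--Tate map) are exactly the steps the paper leaves implicit.
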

\begin{proof}
By the definition of a $p$-divisible group with CM by $\sO_L$ of type $\Sigma$, $T_p(G^\vee)$ is a free $\sO_L$-module of rank $1$ and $\Lie G\otimes \Cp$ decomposes as $\sO_L\otimes_{\Zp}\Cp$-module into $1$-dimensional $\Cp$-vector spaces $(\Lie G\otimes \Cp)(\sigma)$ with $\sO_L$-action through $\sigma\in \Sigma$. Therefore, we have 
\[
	HT^\vee(G)=(W(\Sigma), \sO_L)
\]
and the corollary follows.
\end{proof}

\begin{remark}
The last example of $p$-divisible groups with CM by $\sO_L$ was our main motivation for extending the theory of Schneider--Teitelbaum from Lubin--Tate groups to more general $p$-divisible groups. In an upcoming paper, we will use this theory to construct $p$-adic $L$-functions (in the sense of $\Sigma$-analytic distributions)  interpolating critical Hecke $L$-values of an arbitrary totally imaginary field.
\end{remark}

\renewcommand{\MR}[1]{\relax\ifhmode\unskip\space\fi MR \MRhref{#1}{#1}}
\renewcommand{\MRhref}[2]{%
  \href{http://www.ams.org/mathscinet-getitem?mr=#1}{#2}.
}
\providecommand{\bysame}{\leavevmode\hbox to3em{\hrulefill}\thinspace}

\bibliographystyle{amsalpha}
\bibliography{character-varieties.bib}

@preamble{"\providecommand{\MR}[1]{}"}

@article{Amice64,
     author = {Amice, Y.},
     title = {Interpolation $p$-adique},
     journal = {Bulletin de la Soci\'et\'e Math\'ematique de France},
     pages = {117--180},
     publisher = {Soci\'et\'e math\'ematique de France},
     volume = {92},
     year = {1964},
     doi = {10.24033/bsmf.1606},
     zbl = {0158.30201},
     language = {fr},
     url = {http://www.numdam.org/articles/10.24033/bsmf.1606/}
}

@incollection {Amice78,
	AUTHOR = {Amice, Y.},
	TITLE={Duals},
     BOOKTITLE = {Proceedings of the {C}onference on {$p$}-adic {A}nalysis},
      NOTE = {Held in Nijmegen, January 16--20, 1978,
              Report, 7806},
 PUBLISHER = {Katholieke Universiteit, Mathematisch Instituut, Nijmegen},
      YEAR = {1978},
     PAGES = {ii+224},
   MRCLASS = {12B40},
  MRNUMBER = {522116},
}

@article {Fargues19,
    AUTHOR = {Fargues, Laurent},
     TITLE = {Groupes analytiques rigides {$p$}-divisibles},
   JOURNAL = {Math. Ann.},
  FJOURNAL = {Mathematische Annalen},
    VOLUME = {374},
      YEAR = {2019},
    NUMBER = {1-2},
     PAGES = {723--791},
      ISSN = {0025-5831,1432-1807},
   MRCLASS = {14L05 (14G22)},
  MRNUMBER = {3961325},
MRREVIEWER = {Ilya\ Karzhemanov},
       DOI = {10.1007/s00208-018-1782-9},
       URL = {https://doi.org/10.1007/s00208-018-1782-9},
}

@article{ScholzeWeinstein,
	author = "Scholze, P. and Weinstein, J.",
	doi = "10.4310/CJM.2013.v1.n2.a1",
	fjournal = "Cambridge Journal of Mathematics",
	journal = "Camb. of Math.",
	number = "2",
	pages = "145--237",
	title = "{Moduli of $p$-divisible groups}",
	url = "https://dx.doi.org/10.4310/CJM.2013.v1.n2.a1",
	volume = "1",
	year = "2013"
}

@article {Schinzel,
    AUTHOR = {Schinzel, A.},
     TITLE = {On a decomposition of polynomials in several variables},
   JOURNAL = {J. Th\'eor. Nombres Bordeaux},
  FJOURNAL = {Journal de Th\'eorie des Nombres de Bordeaux},
    VOLUME = {14},
      YEAR = {2002},
    NUMBER = {2},
     PAGES = {647--666},
      ISSN = {1246-7405,2118-8572},
   MRCLASS = {11C08 (11D85 12E05)},
  MRNUMBER = {2040699},
MRREVIEWER = {Mieczys\l aw\ Kula},
       URL = {http://jtnb.cedram.org/item?id=JTNB_2002__14_2_647_0},
}

@article {ST,
    AUTHOR = {Schneider, P. and Teitelbaum, J.},
     TITLE = {{$p$}-adic {F}ourier theory},
   JOURNAL = {Doc. Math.},
  FJOURNAL = {Documenta Mathematica},
    VOLUME = {6},
      YEAR = {2001},
     PAGES = {447--481},
      ISSN = {1431-0635,1431-0643},
   MRCLASS = {11S31 (11G40 14G22 46S10)},
  MRNUMBER = {1871671},
MRREVIEWER = {Takao\ Yamazaki},
}

@book {BGR,
    AUTHOR = {Bosch, S. and G\"untzer, U. and Remmert, R.},
     TITLE = {Non-{A}rchimedean analysis},
    SERIES = {Grundlehren der mathematischen Wissenschaften [Fundamental
              Principles of Mathematical Sciences]},
    VOLUME = {261},
      NOTE = {A systematic approach to rigid analytic geometry},
 PUBLISHER = {Springer-Verlag, Berlin},
      YEAR = {1984},
     PAGES = {xii+436},
      ISBN = {3-540-12546-9},
   MRCLASS = {32K10 (30G05 46P05)},
  MRNUMBER = {746961},
MRREVIEWER = {W.\ Bartenwerfer},
       DOI = {10.1007/978-3-642-52229-1},
       URL = {https://doi.org/10.1007/978-3-642-52229-1},
}

@unpublished{Kings-Sprang,
      title={Eisenstein-{K}ronecker classes, integrality of critical values of {H}ecke {L}-functions and p-adic interpolation}, 
      author={G. Kings and J. Sprang},
      year={},
      eprint={1912.03657},
      archivePrefix={arXiv},
      note={to appear in {A}nnals of {M}athematics},
}
\end{document}